\newcommand{\Ext}{\mathrm{Ext}}
\newcommand{\Hom}{\mathrm{Hom}}
\newcommand{\CM}{\mathrm{CM}}
\newcommand{\depth}{\mathrm{depth }}
\newcommand{\CC}{\mathcal{C}}
\newcommand{\ZZ}{\mathbb{Z}}
\newcommand{\spec}{\mathrm{Spec}}
\newcommand{\coker}{\mathrm{coker}}
\newtheorem{theorem}{Theorem}[section]
\newtheorem*{theorem*}{Theorem}
\newtheorem{lemma}[theorem]{Lemma}
\newtheorem*{lemma*}{Lemma}
\newtheorem{proposition}[theorem]{Proposition}
\newtheorem*{proposition*}{Proposition}
\newtheorem{corollary}[theorem]{Corollary}
\newtheorem*{corollary*}{Corollary}
\theoremstyle{definition}
\newtheorem{definition}[theorem]{Definition}
\newtheorem*{definition*}{Definition}
\newtheorem{example}[theorem]{Example}
\newtheorem{remark}[theorem]{Remark}
\newtheorem{introtheorem}{Theorem}
\numberwithin{equation}{theorem}
\def\CC{\mathbb{C}}
\def\N{\mathbb{N}}
\def\T{\mathcal{T}}
\def\xx{\boldsymbol{x}}
\def\yy{\boldsymbol{y}}
\renewcommand{\S}{\mathsf{S}}
\def\CCC{\mathcal{C}}
\def\ind{\mathsf{ind}}
\def\coind{\mathsf{coind}}
\newcommand{\mc}[1]{\ensuremath{\mathcal{#1}}}   
\newcommand{\mf}[1]{\ensuremath{\mathfrak{#1}}}   
\title{Penrose tilings, infinite friezes, and the $A_\infty$-singularity}
\author[\"{O}.~Esentepe]{\"{O}zg\"{u}r Esentepe}
\email{ozgur.esentepe@uni-graz.at}
\address{Institut für Mathematik und Wissenschaftliches Rechnen, Universität Graz,
Heinrichstraße 36, 8010 Graz, Austria}
\author[E.~Faber]{Eleonore Faber}
\email{eleonore.faber@uni-graz.at}
\address{
Institut f\"ur Mathematik und Wissenschaftliches Rechnen,
Universit\"at Graz,
Heinrichstr.~36,
A-8010 Graz, Austria and School of Mathematics, University of Leeds, LS2 9JT Leeds, UK
}
\date{\today}
\thanks{ Work of both authors was supported by EPSRC grant EP/W007509/1. This material is based upon work supported by the National Science Foundation under Grant No. DMS-1928930 and by the Alfred P. Sloan Foundation under grant G-2021-16778, while E.F.~ was in residence at the Simons Laufer Mathematical Sciences Institute (formerly MSRI) in Berkeley, California, during the Spring 2024 semester.
}
\subjclass[2020]{52C20, 52C23, 13F60, 13C14, 13A02, 05E10, 18D99  }
\keywords{Penrose tilings, frieze patterns, cluster character, Cohen-Macaulay modules}
\begin{document}

\begin{abstract}
We study Penrose tilings of the plane $\mathbb{R}^2$ and nonperiodic infinite frieze patterns from the point of view of Cohen--Macaulay representation theory:  Triangulations of the completed $\infty$-gon correspond to subcategories of the Frobenius category $\mc{C}_2=\CM_{\ZZ}(\CC[x,y]/(x^2))$, the singularity category of the curve singularity of type $A_\infty$. We relate Penrose tilings to certain triangulations of the completed $\infty$-gon, and thus to the corresponding subcategories of $\mc{C}_2$.  We then extend the cluster character of Paquette and Y{\i}ld{\i}r{\i}m for a triangulated category modelling said triangulations to our setting. This allows us to define nonperiodic infinite friezes patterns coming from triangulations of the completed $\infty$-gon and in particular from Penrose tilings. 
\end{abstract}

\maketitle

\thispagestyle{empty}

\section{Introduction}

Penrose tilings are aperiodic tilings of the plane $\mathbb{R}^2$ which were investigated by the renowned physicist Roger Penrose in the 1970s \cite{Penrose74paper,Penrose78paper}. They appear everywhere from medieval Islamic geometric patterns to the study of quasicrystals which earned Shechtman the 2011 Nobel Prize in Chemistry \cite{nobel-prize}.

Frieze patterns were introduced by Coxeter in \cite{Coxeter} motivated by Gauss's \textit{pentagramma mirificum}. They are grids of positive integers satisfying a determinant rule and together with Conway, Coxeter showed that there is a one-to-one correspondence between frieze patterns and triangulated polygons \cite{CoCo1,CoCo2}. These friezes have subsequently appeared in the representation theory of finite dimensional algebras in the last two decades and they are a topic of active research. We refer to \cite{Crossroads, Baur, Pressland-LMS, FaberICRA} for surveys on frieze patterns.

By the $A_\infty$-singularity, we mean the commutative ring $S = \CC[x,y]/(x^2)$. This is a nonreduced hypersurface ring of Krull dimension one - hence it has a nonisolated singularity - and $\spec(S)$ is a plane curve singularity of type $A_\infty$. It is referred to as the $A_\infty$-singularity as it can be seen (in some sense) as a limit of $A_n$-singularities given by $\CC[x,y]/(x^2+y^{n+1})$, see \cite{Siersma}.

Our goal in this paper is to draw connections between these classical and beautiful objects using the representation theory and the combinatorics associated to the commutative ring $S$. To start with, we equip $S$ with a $\ZZ$-grading and consider the category $\mc{C}_2$ of $\ZZ$-graded maximal Cohen-Macaulay $S$-modules. In \cite{ACFGS}, the authors studied this Frobenius category as the Grassmannian category of infinite rank associated to an infinite version of the Grassmannian of planes and they classified cluster tilting subcategories of it. In particular, they showed that there exists a one-to-one correspondence between cluster tilting subcategories of $\mc{C}_2$ and fountain triangulations of the completed $\infty$-gon. We are going to recall the necessary background in Section \ref{Sec:Prelim}.
Let us note that there are several other approaches to categorify the combinatorics of $\infty$-gons, for example \cite{Holm-Jorgensen, F16, Paquette-Yildirim, IgusaTodorov-cyclic, Canakci-Kalck-Pressland}, cf.~the discussion in the introduction of \cite{ACFGS} where some of these approaches are compared.

One of the fundamental tools in the representation theory of cluster algebras and cluster categories is the notion of mutation. In the finite case, the rules of mutation are well-defined and well-understood. In particular, for cluster algebras of type $A$, mutation can be described combinatorially as flips of diagonals in a triangulated polygon. In the infinite case, there are simply too many triangulations of an $\infty$-gon. There are many pairs of triangulations which may not be connected via a series of finitely many mutations. To attack this problem, Baur and Gratz introduced in \cite{Baur-Gratz} strong mutation equivalence and transfinite mutation equivalence. They proved that any two triangulations of the completed $\infty$-gon are transfinitely mutation equivalent. This is where our motivation for our main theorem comes from: finite mutation equivalence gives us a very wild behaviour and transfinite mutation equivalence gives us only one equivalence class. Can we find a new type of mutation equivalence which lives somewhere between these two extremes? Our answer is affirmative. We introduce an equivalence relation $\sim$ on the set of cluster tilting subcategories of $\mc{C}_2$ and a special class of cluster tilting subcategories of $\mc{C}_2$ which allows us to prove our first main theorem.
\begin{introtheorem}\label{intro-theorem-penrose}
    Let $2^\mathbb{N}$ denote the set of all sequences on $\{0,1\}$ and $\mathcal{P}$ denote the space of Penrose tilings.
    \begin{enumerate}
        \item There exists a surjection 
        \begin{align*}
           \psi \colon \{\text{Cluster tilting subcategories of } \mc{C}_2\}\twoheadrightarrow 2^{\mathbb{N}} \times 2^{\mathbb{N}} \ .
        \end{align*}
        \item For two cluster tilting subcategories $\S_1, \S_2$ of $\mc{C}_2$, we have $\S_1 \sim \S_2$ if and only if $\psi(\S_1)$ and $\psi(\S_2)$ differ only in finitely many places.  
        \item There exists a one-to-one correspondence 
        \begin{align*}
        \frac{\{\text{Special cluster tilting subcategories of } \mc{C}_2\}}{\sim} \longleftrightarrow \frac{\mathcal{P} \times \mathcal{P}}{\text{isometry}} \ .
        \end{align*}
    \end{enumerate}
\end{introtheorem}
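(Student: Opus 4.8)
The plan is to transport the whole statement into combinatorics via the bijection of \cite{ACFGS} between cluster tilting subcategories of $\mc{C}_2$ and fountain triangulations of the completed $\infty$-gon, and then to match fountain triangulations first with pairs of binary sequences and finally with Penrose tilings. First I would construct $\psi$ combinatorially. Under \cite{ACFGS} a cluster tilting subcategory $\S$ corresponds to a fountain triangulation $T$, whose arcs accumulate in the two asymptotic directions of the $\infty$-gon (the two sides of the fountain, equivalently the two limit vertices of the completion). I would read off, in each direction separately, the local accumulation pattern of $T$ as a sequence in $2^{\mathbb{N}}$: at the $n$-th step one records a single bit naming which of the two possible local configurations occurs (on which side of the current fountain arc the next triangle is attached). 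This produces $\psi(\S)\in 2^{\mathbb{N}}\times 2^{\mathbb{N}}$, one coordinate per direction. For surjectivity I would show every pair $(s,t)$ is realised by explicitly assembling a set of non-crossing arcs whose patterns near the two limit vertices are dictated by $s$ and $t$, checking it is a genuine (maximal) fountain triangulation, and pulling it back through \cite{ACFGS}.

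For part (2) I would unwind the definition of the equivalence relation $\sim$, which is designed to sit strictly between finite and transfinite mutation equivalence and which compares two triangulations up to a finite set of arcs (equivalently, their sets of indecomposables up to finite symmetric difference). Since each bit of $\psi(\S)$ is determined by one arc/triangle of $T$ near a limit vertex, altering finitely many arcs changes only finitely many bits; conversely, changing finitely many bits is realised by altering finitely many arcs. This yields $\S_1\sim\S_2$ if and only if $\psi(\S_1)$ and $\psi(\S_2)$ differ in only finitely many places, i.e.\ are tail-equivalent in each coordinate.

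For part (3) I would invoke the classical encoding of Penrose tilings due to de Bruijn and Robinson: $\mathcal{P}$ modulo isometry is in bijection with the admissible $\{0,1\}$-sequences, those having no two consecutive $1$'s, modulo tail-equivalence. I would then \emph{define} the special cluster tilting subcategories to be exactly those whose image under $\psi$ is a pair of admissible sequences, so that $\psi$ restricts to a surjection onto the product of the two sets of admissible sequences. Combining this with part (2), passing to $\sim$-classes turns the domain into the product of the two spaces of tail-equivalence classes of admissible sequences, which is precisely $(\mathcal{P}/\mathrm{isometry})\times(\mathcal{P}/\mathrm{isometry})=(\mathcal{P}\times\mathcal{P})/\mathrm{isometry}$, giving the desired bijection.

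The main obstacle is the combinatorial heart of part (1): pinning down exactly what local datum at each limit vertex a single bit should record, so that simultaneously $\psi$ is well defined and surjective, finite changes of arcs correspond to finite changes of bits, and the Penrose admissibility condition (no consecutive $1$'s) is mirrored by a natural structural condition on the triangulations singling out the special subcategories. Securing these three compatibilities at once — rather than each in isolation — is where the real work lies; once the dictionary between arcs near a fountain and bits of a sequence is set up correctly, parts (2) and (3) follow by bookkeeping together with the known Penrose encoding.
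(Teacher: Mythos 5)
Your overall architecture matches the paper's: reduce to fountain triangulations via the bijection of \cite{ACFGS}, encode the left and right fountains as binary sequences, match the equivalence relation with tail equivalence, and match a ``special'' condition with the no-consecutive-ones admissibility for Penrose tilings. However, two things you treat as bookkeeping are in fact the content of the theorem, and one of them you get wrong.

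First, the definition of $\psi$. You leave the bit-encoding as ``which of the two possible local configurations occurs'' and explicitly defer ``pinning down exactly what local datum \dots\ a single bit should record'' as the main obstacle; but that obstacle \emph{is} the theorem. The paper's choice is concrete: after shifting so that the fountain point is $0$, set $\xx^R_n=1$ iff the fountain arc $(0,n+1)$ lies in the triangulation $R$, so the bit is indexed by boundary vertices, not by triangles attached to the fountain. With this choice all three of your required compatibilities are verified by explicit lemmas: consecutive $1$'s at positions $n,n+1$ occur iff $\yy^R_{n+1}-\yy^R_n=1$ iff the arc $(\yy^R_n,\yy^R_{n+1})$ joining consecutive fountain-arc endpoints is a boundary arc, i.e.\ a projective-injective of $\mc{C}_2$ --- which is exactly what makes ``special'' a natural categorical (stability) condition in part (3).

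Second, and more seriously, your proposed $\sim$ --- ``their sets of indecomposables up to finite symmetric difference'' --- makes part (2) false. A right fountain triangulation decomposes into the fountain arcs $(0,\yy^R_n)$, the arcs $(\yy^R_n,\yy^R_{n+1})$, and arbitrary triangulations of the finite polygons with vertices $\yy^R_n,\dots,\yy^R_{n+1}$. Re-triangulating infinitely many of these polygons changes infinitely many arcs but leaves $\xx^R$, hence $\psi$, unchanged; so ``finitely many bits differ'' does not imply ``finite symmetric difference.'' The paper's $\sim$ is instead defined through mutation: only finitely many mutations are permitted at the distinguished arcs $(0,\yy^R_n)$ and $(\yy^R_n,\yy^R_{n+1})$, while arbitrarily (even infinitely) many are permitted elsewhere. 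Part (2) then rests on three mutation lemmas: mutating at $(0,\yy^R_m)$ flips exactly one bit from $1$ to $0$, mutating at $(\yy^R_m,\yy^R_{m+1})$ flips exactly one bit from $0$ to $1$, and mutating at any other arc leaves $\xx^R$ unchanged. Relatedly, your ``special'' (image already admissible) is stricter than the paper's (equivalent under $\sim$ to a triangulation with admissible image, equivalently only finitely many boundary arcs among the $(\yy^R_n,\yy^R_{n+1})$); this is harmless once $\sim$ is corrected, but without the correct $\sim$ and a concrete $\psi$, parts (2) and (3) do not follow.
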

Our Section \ref{Sec:Penrose-friezes} is devoted to defining what these special cluster tilting subcategories are and to proving this theorem. In Section \ref{Sec:CC->Frieze}, we move to infinite friezes. The representation theoretic link between friezes and cluster algebras has been well-established since the beginning of 2000s, starting with the work of Caldero and Chapoton \cite{Caldero-Chapoton}. The main tool here is a function called \textit{cluster character}. However, this map often requires some finiteness conditions. Recently, \cite[Section 6]{Paquette-Yildirim}, went through the technical details and defined a cluster character $\chi_\T$ on a triangulated category associated to the $\infty$-gon where $\T$ is a cluster tilting subcategory of this triangulated category. Utilising their methods, given a cluster tilting subcategory $\S$  of the Frobenius category $\mc{C}_2$, we define a cluster character $X^\S$ on (a certain subcategory of) $\mc{C}_2$. Our second main theorem is as follows.
\begin{introtheorem}[see Theorem \ref{Thm:half-friezes-coeffs} and Definition \ref{Def:inf-frieze-coeff}]\label{intro-theorem-friezes}
    Let $\S$ be a cluster tilting subcategory of the Frobenius category $\mc{C}_2$. Then, the cluster character $X^\S$ yields an infinite fountain frieze $\mc{F}$ with coefficients.
\end{introtheorem}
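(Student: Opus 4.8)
The plan is to produce $\mc{F}$ by evaluating the cluster character $X^\S$ on the indecomposable objects of $\mc{C}_2$ corresponding to the arcs of the completed $\infty$-gon, and then to arrange these values in a grid whose rows and columns are organized by the fountain of the triangulation attached to $\S$. Under the correspondence of \cite{ACFGS}, $\S$ determines a fountain triangulation $T_\S$ of the completed $\infty$-gon, and the indecomposables of $\mc{C}_2$ are indexed by its arcs; I place the value $X^\S(M)$ at the grid position recording the two endpoints of the arc of $M$. The border entries are the values of $X^\S$ on the arcs of $T_\S$ itself (the cluster variables, specializing to $1$), together with the values on the projective objects, which will supply the coefficients.

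The heart of the argument is the cluster-character multiplication formula. For two indecomposables $M,N$ whose arcs cross in a single point, $\Ext^1$ in the stable category $\underline{\mc{C}_2}$ is one-dimensional, and the two non-split extensions resolve the crossing into the two Ptolemy pairs $M',M''$ of the surrounding quadrilateral; the formula then reads $X^\S(M)\,X^\S(N)=X^\S(M')+X^\S(M'')$, possibly twisted by coefficient monomials. I would obtain this by transporting the corresponding statement from \cite[Section~6]{Paquette-Yildirim} along the relation between $\underline{\mc{C}_2}$ and the triangulated category they use. The projective–injective objects of the Frobenius category $\mc{C}_2$ vanish in $\underline{\mc{C}_2}$ and are invisible to $\chi_\T$; it is exactly the extra bookkeeping that records them in $X^\S$ which yields the coefficient entries. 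Reading the formula off the four arcs bounding a diamond of the grid then turns it into the determinant rule $ad-bc=\beta\gamma$ of a frieze with coefficients, where $\beta,\gamma$ are the coefficient entries attached to that diamond.

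Next I would verify integrality, positivity and finiteness of the entries. Each $X^\S(M)$ is by construction a Laurent polynomial in the variables attached to $\S$, and the entries of $\mc{F}$ are its specialization at all cluster variables equal to $1$; positivity of the coefficients of these Laurent polynomials gives genuinely positive quantities. Finiteness of each entry is not automatic near the fountain, where infinitely many arcs accumulate, but on the subcategory of $\mc{C}_2$ on which $X^\S$ is defined every object crosses only finitely many arcs of $T_\S$, so each $X^\S(M)$ is a finite sum and every individual entry of the (infinite) array is well defined.

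The step I expect to be hardest is showing that the locally consistent diamonds genuinely assemble into a single infinite fountain frieze, rather than a merely locally coherent array. Because the horizontal pattern is nonperiodic — this is precisely where the Penrose data enters — I cannot propagate the diamond rule by periodicity as in the Conway–Coxeter case. Instead I would argue outward from the fountain, using that $T_\S$ determines two half-lines of arcs meeting at the fountain whose crossing numbers with $\S$ fill out the two half-friezes of Theorem \ref{Thm:half-friezes-coeffs}, and then check that these two halves glue consistently across the fountain row. Establishing this gluing is the crux; once it holds, Definition \ref{Def:inf-frieze-coeff} applies directly and exhibits the array as an infinite fountain frieze with coefficients.
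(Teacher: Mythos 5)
Your proposal follows essentially the same route as the paper: evaluate $X^\S$ on the indecomposables indexed by finite arcs, obtain the diamond rule from the cluster-character multiplication/exchange formulas applied to the two non-split extensions of a crossing pair, and observe that the exchange sequence $0 \to (a,b) \to (a-1,a)\oplus(b-1,b)\to (a-1,b-1)\to 0$ has projective-injective middle term, which is exactly what produces the coefficient term $x_{a-1,a}\,x_{b-1,b}$ (this is Lemma \ref{Lem:CC-frieze-coeff} in the paper). The one place where you diverge is in locating the difficulty of the final assembly step. You single out as ``the crux'' a gluing of the two half-friezes across the fountain row, but by Definition \ref{Def:inf-frieze-coeff} a fountain frieze is simply the union of a left half-frieze and a right half-frieze at the same centre: the positions $m_{a,b}$ with $a<0<b$, i.e.\ the arcs crossing the fountain point, are deliberately left blank (their cluster characters are genuine Laurent series, specializing to $\infty$), so there is nothing to glue and no consistency condition to verify between the two halves. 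The actual completeness argument in the paper is the one you gesture at but do not pin down: each fountain arc $(0,n)$ cuts out a finite triangulated polygon on the vertices $0,\dots,n$, whose Conway--Coxeter-type frieze is determined by finitely many applications of the diamond rule, and since there are infinitely many fountain arcs these finite friezes nest and exhaust every entry of the right half; the left half is handled symmetrically. With that substitution for your final paragraph your argument matches the paper's proof; also note that integrality and positivity are not required for the statement as given, since the frieze with coefficients takes values in the Laurent ring $R_\S$ rather than in $\ZZ_{>0}$.
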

We devote Section \ref{Sec:Examples} to examples, in particular, we compute a fountain frieze and a frieze coming from a leapfrog triangulation of the $\infty$-gon.

\section{Preliminaries} \label{Sec:Prelim}

In this section, we are going to introduce some notation and give preliminaries that will be needed for the rest of the paper.

\subsection{Maximal Cohen--Macaulay modules over the $A_\infty$-singularity} \label{Sub:MCM-Frobenius-cat}
Given a commutative ring $S$, a finitely generated $S$-module $M$ is \textit{maximal Cohen--Macaulay} if $\depth_S M_\mf{p} = \dim M_\mf{p} = \dim S_\mf{p}$ for every prime ideal $\mf{p}$ of $S$. When $S$ is a commutative Gorenstein ring, this condition is equivalent to requiring that $\Ext_S^i(M,S) =0$ for all $i > 0 $ and maximal Cohen--Macaulay modules coincide with Gorenstein-projective modules, see e.g. \cite[Chapter 4]{Buchweitz}. By virtue of the Depth Lemma, the category of maximal Cohen--Macaulay modules $\CM(S)$ is an exact category and in the Gorenstein case the above Ext-vanishing gives us a Frobenius structure. In this special case, the category of maximal Cohen--Macaulay modules modulo projectives, i.e. \textit{the stable category} of maximal Cohen--Macaulay modules $\underline{\CM}(S)$, has the natural structure of a triangulated category with the shift functor $\Omega^{-1}$ (the coszygy functor) and is equivalent (as triangulated category) to the singularity category of $S$ which is defined as the Verdier quotient of the bounded derived category of $S$-modules by the subcategory of perfect complexes by the celebrated work of Buchweitz \cite{Buchweitz}.

Next, we continue with another celebrated work from the 1980's: Eisenbud's matrix factorizations \cite{Eisenbud}. Over a local hypersurface ring $\CC\llbracket x_0,\ldots,x_d\rrbracket/(f)$ given by a polynomial $f$, every maximal Cohen--Macaulay module has a two-periodic minimal projective resolution. Moreover, the ranks of the free modules appearing in this resolution are all equal, say $l >0$. This gives us a pair of square matrices $A$ and $B$ with entries in $\CC\llbracket x_0,\ldots, x_d\rrbracket$ such that $AB = BA = f \mathbb{I}_l$. The pair $(A,B)$ is called a \textit{matrix factorization} of $f$. In fact, the assignment $(A,B) \mapsto \coker A$ yields an equivalence of categories between the category of matrix factorizations and maximal Cohen--Macaulay modules. This equivalence reduces to an equivalence of triangulated categories between the stable category of maximal Cohen--Macaulay modules and the homotopy category of matrix factorizations.

One can also make these definitions in the graded setting albeit with a little extra care.

As mentioned in the introduction, we are interested in maximal Cohen--Macaulay modules over the ring $S= \CC[x,y]/(x^2)$. We will consider $S$ as a graded ring with $\deg(x) = 1$ and $\deg(y) = -1$ and we will consider the category $\CCC_2=\CM_\ZZ(S)$ of graded maximal Cohen--Macaulay modules. This is a Frobenius category and it has the Krull--Remak--Schmidt property.

The ring $S$ has countable Cohen--Macaulay type. That is, there are countably many indecomposable maximal Cohen--Macaulay modules up to isomorphism, see, for instance \cite[Chapter 14]{Leuschke-Wiegand}. In the local case, the indecomposable maximal Cohen--Macaulay $S$-modules were classified by Buchweitz--Greuel--Schreyer \cite[Proposition 4.1]{Buchweitz-Greuel-Schreyer} and all these modules are gradable. So, one has the following families of indecomposable objects in the category $\CCC_2$, see \cite[Proposition 2.1]{ACFGS}:
\begin{enumerate}
    \item the shifts of projective module $S$ given by the matrix factorization 
    \begin{align*}
    \CC[x,y](-2) \xrightarrow{1} \CC[x,y](-2) \xrightarrow{x^2} \CC[x,y],
    \end{align*}
    \item the shifts of the module $\CC[y] \cong S/(x)$ given by the matrix factorization 
    \begin{align*}
    \CC[x,y](-2) \xrightarrow{x} \CC[x,y](-1) \xrightarrow{x} \CC[x,y],
    \end{align*}
    \item the shifts of the family of the ideals: $(x,y^k)$ with $k \geq 1$ given by the matrix factorization 
    \begin{align*}
    \CC[x,y](-2) \oplus \CC[x,y](k-1) \xrightarrow{B} \CC[x,y](-1) \oplus \CC[x,y](k) \xrightarrow{A} \CC[x,y] \oplus \CC[x,y](k+1) 
    \end{align*}
    where 
    \begin{align*}
    A = B = \begin{bmatrix}
        x & y^k \\ 0 & -x
    \end{bmatrix}.
    \end{align*}
    
\end{enumerate}

\subsection{The combinatorics} \label{Sub:comb-model}

We can visualise the indecomposable graded maximal Cohen--Macaulay $S$-modules as arcs in a (completed) $\infty$-gon. By an $\infty$\textit{-gon}, we mean a disc with a discrete set of marked points on the boundary indexed by the integers. By the Bolzano--Weierstrass theorem, these marked points need to have at least one accumulation point. Our assumption is that there is a unique accumulation point which we call $\infty$. By adding $\infty$ to the set of the marked points, we talk about the \textit{completed} $\infty$-gon.

By an \textit{arc} in the completed $\infty$-gon, we mean a pair $(a,b)$ in $\ZZ \cup \{\infty\}$ with $a < b$. We say that an arc is a \textit{finite arc} if $a < b< \infty$ and an \textit{infinite arc} if $a < b = \infty$. To the shifts of the regular module $S$, we associate the \textit{boundary arcs}: the projective module $S(j)$ is matched with the arc $(-j, 1-j)$ for any $j \in \ZZ$. For the ideal $(x,y^k)(j)$ with $k \geq 1$ and $j \in \ZZ$, we have the finite arc $(-j-k, 1-j)$. Finally, for the modules $\CC[y](j)$, we have the corresponding arcs $(-j,\infty)$. \\
Conversely, an arc $(a,b)$ with $a < b$ is matched with the ideal $(x,y^{b-a-1})(1-b)$, and in particular, a boundary arc $(a,a+1)$ corresponds to $S(-a)$. The infinite arc $(a, \infty)$ corresponds to the module $\CC[y](-a)$. This gives us a bijection between the arcs on the $\infty$-gon and the set of isomorphism classes of graded maximal Cohen--Macaulay $S$-modules. \\

We say that two arcs $(a,b)$ and $(c,d)$ \textit{cross} if $a<c<b<d$ or $c<a<d<b$. Equivalently, we have that $\Ext^1$ between the corresponding modules in $\CCC_2=\CM_\ZZ(S)$ is $1$-dimensional. We will often use the combinatorial terminology when discussing properties of the category $\CCC_2$. A \textit{triangulation} of the (completed) $\infty$-gon is a maximal set of noncrossing arcs. The triangulations of the $\infty$-gon were described in \cite[Lemma 3.3]{Holm-Jorgensen} and for the completed $\infty$-gon in \cite{Baur-Gratz}. 

A set of arcs $\{(n,x_i) \colon i \in \mathbb{N}\}$ is called a \textit{right fountain} if $x_i$ is an increasing sequence. We call $n$, in this case, the \textit{fountain point} of the right fountain. Left fountains are defined similarly. A \textit{fountain} is a union of a left fountain and a right fountain with the same fountain point. If we are working with the completed $\infty$-gon, we also consider the infinite arc at the fountain point. There exists a one-to-one correspondence between cluster tilting subcategories of $\CCC_2$ and fountain triangulations of the completed $\infty$-gon, i.e., triangulations of the completed $\infty$-gon containing a fountain \cite[Theorem 4.11]{ACFGS}. For a schematic picture, see Fig.~\ref{Fig:fountain}:
\begin{figure}[h!]
\begin{tikzpicture}[scale=3,cap=round,>=latex]

    \foreach \a in {  20, 40, 60, 80, 100, 120} {
  \draw (270:0.5) .. controls (270:0.5) and (270-\a:0.4) .. (270-\a:0.5);
  }

    \foreach \a in {  20, 40, 60, 80, 100, 120} {
  \draw (270:0.5) .. controls (270:0.5) and (270+\a:0.4) .. (270+\a:0.5);
  }

  \foreach \a in {270, 290,310,330,350,10, 30} {
    \node[fill=black,circle,inner sep=0.02cm] at (\a:0.5) {};
  }

  \foreach \a in { 250,230,210,190,170, 150} {
    \node[fill=black,circle,inner sep=0.02cm] at (\a:0.5) {};
  }

\draw(270:0.5) .. controls (270:0.2) and (90:0.2) .. (90:0.5);

  \draw[thick,dotted] (30:0.4) arc (30:80:0.4);
    \draw[thick,dotted] (150:0.4) arc (150:100:0.4);

  \draw (0,0) circle(0.5cm);

  \node at (90:0.65) {$\infty$};
  \draw (90:0.5) node[fill=black,circle,inner sep=0.065cm] {} circle (0.015cm);	  

\end{tikzpicture}
\caption{A fountain triangulation.} \label{Fig:fountain}
\end{figure}
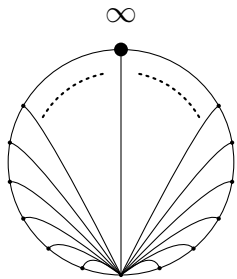  

In Fig.~\ref{Fig:Triangulations} are schematic pictures of the $5$ different types of triangulations of the completed $\infty$-gon. Note here that each triangle in the schematic triangulation may contain a triangulation by finitely many arcs. We refer to Remark \ref{Sub:triangulations} for the relations of triangulations of the (completed) $\infty$-gon to cluster tilting and maximal (almost) rigid subcategories.

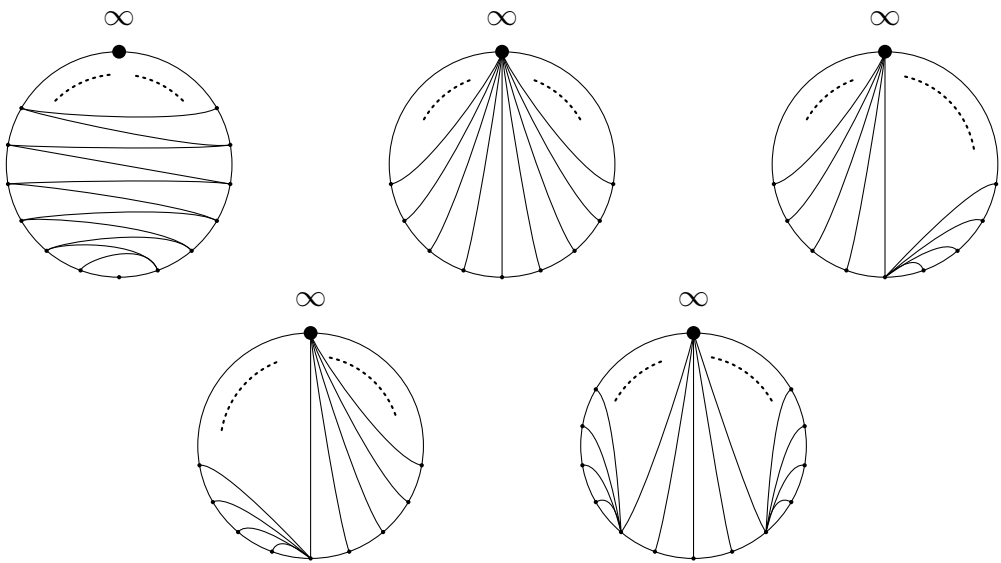
\begin{figure}[h!]
\centering
\begin{tabular}{ccc}
\begin{minipage}{0.3 \textwidth}
\begin{tikzpicture}[scale=3,cap=round,>=latex]

  \foreach \a in {   40,  60, 80, 100, 120} {
  \draw (270-\a:0.5) .. controls (270-\a:0.45) and (270+\a:0.35) .. (270+\a:0.5);
    \draw (270-\a:0.5) .. controls (270-\a:0.45) and (270+\a-20:0.35) .. (270+\a-20:0.5);
 }
    \draw (270-20:0.5) .. controls (270-20:0.45) and (270+20:0.35) .. (270+20:0.5);

  \foreach \a in {270, 290,310,330,350,10, 30} {
    \node[fill=black,circle,inner sep=0.02cm] at (\a:0.5) {};
  }

  \foreach \a in { 250,230,210,190,170,150} {
    \node[fill=black,circle,inner sep=0.02cm] at (\a:0.5) {};
  }
  

  \draw[thick,dotted] (45:0.4) arc (45:80:0.4);
    \draw[thick,dotted] (135:0.4) arc (135:95:0.4);

  \draw (0,0) circle(0.5cm);

  \node at (90:0.65) {$\infty$};
  \draw (90:0.5) node[fill=black,circle,inner sep=0.065cm] {} circle (0.015cm);	  

\end{tikzpicture}
\end{minipage}

\begin{minipage}{0.3 \textwidth}
\begin{tikzpicture}[scale=3,cap=round,>=latex]

  \foreach \a in {  280,  300,    320,   340, 20, 40,60,80} {
  \draw (90:0.5) .. controls (90:0.45) and (270+\a:0.35) .. (270+\a:0.5);
  }

  \foreach \a in {270, 290,310,330,350} {
    \node[fill=black,circle,inner sep=0.02cm] at (\a:0.5) {};
  }

  \foreach \a in { 250,230,210,190} {
    \node[fill=black,circle,inner sep=0.02cm] at (\a:0.5) {};
  }

\draw(270:0.5) .. controls (270:0.2) and (90:0.2) .. (90:0.5);

  \draw[thick,dotted] (30:0.4) arc (30:70:0.4);
    \draw[thick,dotted] (150:0.4) arc (150:110:0.4);

  \draw (0,0) circle(0.5cm);

  \node at (90:0.65) {$\infty$};
  \draw (90:0.5) node[fill=black,circle,inner sep=0.065cm] {} circle (0.015cm);	  

\end{tikzpicture}
\end{minipage}
\begin{minipage}{0.3 \textwidth}
\begin{tikzpicture}[scale=3,cap=round,>=latex]

  \foreach \a in {  280, 300,    320,   340} {
  \draw (90:0.5) .. controls (90:0.45) and (270+\a:0.35) .. (270+\a:0.5);
  }
  
    \foreach \a in {  20, 40, 60, 80} {
  \draw (270:0.5) .. controls (270:0.5) and (270+\a:0.4) .. (270+\a:0.5);
  }

  \foreach \a in {270, 290,310,330,350} {
    \node[fill=black,circle,inner sep=0.02cm] at (\a:0.5) {};
  }

  \foreach \a in { 250,230,210,190} {
    \node[fill=black,circle,inner sep=0.02cm] at (\a:0.5) {};
  }

\draw(270:0.5) .. controls (270:0.2) and (90:0.2) .. (90:0.5);

  \draw[thick,dotted] (10:0.4) arc (10:80:0.4);
    \draw[thick,dotted] (150:0.4) arc (150:110:0.4);

  \draw (0,0) circle(0.5cm);

  \node at (90:0.65) {$\infty$};
  \draw (90:0.5) node[fill=black,circle,inner sep=0.065cm] {} circle (0.015cm);	  

\end{tikzpicture}
\end{minipage}
\end{tabular}
\begin{tabular}{cc}
\begin{minipage}{0.3 \textwidth}
\begin{tikzpicture}[scale=3,cap=round,>=latex]

  \foreach \a in {  280, 300,    320,   340} {
  \draw (90:0.5) .. controls (90:0.45) and (270-\a:0.35) .. (270-\a:0.5);
  }
  
    \foreach \a in {  20, 40, 60, 80} {
  \draw (270:0.5) .. controls (270:0.5) and (270-\a:0.4) .. (270-\a:0.5);
  }

  \foreach \a in {270, 290,310,330,350} {
    \node[fill=black,circle,inner sep=0.02cm] at (\a:0.5) {};
  }

  \foreach \a in { 250,230,210,190} {
    \node[fill=black,circle,inner sep=0.02cm] at (\a:0.5) {};
  }

\draw(270:0.5) .. controls (270:0.2) and (90:0.2) .. (90:0.5);

  \draw[thick,dotted] (20:0.4) arc (20:80:0.4);
    \draw[thick,dotted] (170:0.4) arc (170:110:0.4);

  \draw (0,0) circle(0.5cm);

  \node at (90:0.65) {$\infty$};
  \draw (90:0.5) node[fill=black,circle,inner sep=0.065cm] {} circle (0.015cm);	  

\end{tikzpicture}
\end{minipage}
\begin{minipage}{0.3 \textwidth}
\begin{tikzpicture}[scale=3,cap=round,>=latex]

  \foreach \a in {      320,   340} {
  \draw (90:0.5) .. controls (90:0.45) and (270+\a:0.35) .. (270+\a:0.5);
  }
  
    \foreach \a in {  20, 40} {
  \draw (90:0.5) .. controls (90:0.5) and (270+\a:0.4) .. (270+\a:0.5);
  }


    \foreach \a in { 20, 40, 60, 80} {
  \draw (230:0.5) .. controls (230:0.5) and (230-\a:0.4) .. (230-\a:0.5);
  }

    \foreach \a in { 20, 40, 60, 80} {
  \draw (310:0.5) .. controls (310:0.5) and (310+\a:0.4) .. (310+\a:0.5);
  }

  \foreach \a in {270, 290,310,330,350,10,30} {
    \node[fill=black,circle,inner sep=0.02cm] at (\a:0.5) {};
  }

  \foreach \a in { 250,230,210,190,170,150} {
    \node[fill=black,circle,inner sep=0.02cm] at (\a:0.5) {};
  }

\draw(270:0.5) .. controls (270:0.2) and (90:0.2) .. (90:0.5);

  \draw[thick,dotted] (30:0.4) arc (30:80:0.4);
    \draw[thick,dotted] (150:0.4) arc (150:110:0.4);

  \draw (0,0) circle(0.5cm);

  \node at (90:0.65) {$\infty$};
  \draw (90:0.5) node[fill=black,circle,inner sep=0.065cm] {} circle (0.015cm);	  

\end{tikzpicture}
\end{minipage}
\end{tabular}
\caption{The five different types of triangulations of the $\infty$-gon.} \label{Fig:Triangulations}
\end{figure}

\begin{remark}\label{remark:triangulations-of-natural-numbers}
    For simplicity we will often talk about triangulations of the natural numbers in the coming sections. That is, we will be restricting ourselves to pairs $(a,b)$ with $0 \leq a < b$. We can visualise the set of all such arcs in an array where the arc $(a,b)$ belongs to the $a$th row and the $b$th column. We can then visualise a triangulation of the natural numbers by simply placing a $1$ at the $a$th row and the $b$th column if and only if the arc $(a,b)$ belongs to that triangulation. To mark the boundary, we will use 0 for the pair $(a,a)$.
\end{remark}

\begin{example}\label{running-example}
Below is part of a triangulation of the natural numbers and the corresponding array.

\begin{minipage}{.5\textwidth}
    \begin{tikzpicture}

        \draw[thick] (0,0) -- (9,0);
        
        \foreach \x in {0, 1, 2, 3, 4, 5, 6, 7, 8} {
            \node at (\x, -0.3) {\x};
        }
    
        \draw[thick] (0,0) to[bend left=40] (3,0);
        \draw[thick] (0,0) to[bend left=50] (7,0);
        \draw[thick] (0,0) to[bend left=60] (8,0);
        \draw[thick] (1,0) to[bend left=40] (3,0);
        \draw[thick] (3,0) to[bend left=40] (5,0);
        \draw[thick] (3,0) to[bend left=45] (7,0);
        \draw[thick] (5,0) to[bend left=40] (7,0);
    
    \end{tikzpicture}        
\end{minipage}%
\begin{minipage}{.5\textwidth}
    \begin{align*}
    \begin{array}{c|cccccccccc}
        &0&1&2&3&4&5&6&7&8&\ldots \\\hline
        0&0 & 1 & * & 1 & * & *&*& 1 & 1 & \ldots \\
        1& & 0 & 1& 1 & * & * & *& *& *& \ldots \\
        2&&& 0&1& * &* &*&*&*& \ldots \\
        3&&&&0&1& 1&*&1&*&\ldots \\
        4&&&&&0& 1&*&*&*& \ldots \\
        5&&&&&&0&1&1&*&\ldots\\
        6&&&&&&&0&1&*&\ldots \\
        7&&&&&&&&0 & 1&\ldots
    \end{array}
\end{align*}
\end{minipage}

The first row of the array shows that $(0,1), (0,3), (0,7)$ and $(0,8)$ are in the triangulation and the second row shows $(1,2)$ and $(1,3)$ are and so on.
\end{example}

Next we recall the notion of \emph{mutation} of a triangulation: Let $T$ be a triangulation of the (completed) $\infty$-gon. Then an arc $\gamma \in T$ is mutable if and only if there exists an arc $\gamma' \neq \gamma$ in the (completed) $\infty$-gon such that  
$$\mu_\gamma(T)= (T \backslash \{ \gamma \}) \cup \{ \gamma' \}$$ 
is again a triangulation of the (completed) $\infty$-gon. Note that a triangulation of a given a quadrilateral $ABCD$ is given by a choice of a diagonal. Assume that we chose the diagonal $AC$. Then, when we say we are mutating this triangulation (at the diagonal $AC$), we mean that we replace this triangulation of the quadrilateral $ABCD$ with the triangulation with the diagonal $BD$. Every finite arc in a triangulation of the (completed) $\infty$-gon (except for the boundary arcs) is contained as a diagonal in a unique quadrilateral. Hence, we can in particular talk about mutation at a given nonboundary arc in a given right fountain. For more on mutations and mutability of arcs in the $\infty$-gon, see \cite{Baur-Gratz} and \cite[Section 5]{ACFGS}.

\subsection{Penrose tilings}  
  
There are several books and survey articles introducing and discussing Penrose tilings both for a general audience and the working mathematician \cite{Grunbaum-Shephard,Gardner,Dandrea}. We will refer to the recent book \cite[Chapter 6]{Dandrea} by D'Andrea for the preliminaries that we outline below. 

Penrose tilings are parametrised by sequences on the two element set $\{0,1\}$ with the property that there are no consecutive $1$'s. More precisely we have a bijection
\begin{align*}
    \frac{\{\text{Penrose tilings}\}}{\text{isometry}} \longleftrightarrow \frac{\{\text{sequences on }\{0,1\} \text{ with no consecutive $1$'s}\}}{\text{tail equivalence}} \ .
\end{align*}
Here, tail equivalence means that we declare two sequences equivalent if and only if they agree on all but finitely many places. The set of all sequences on $\{0,1\}$, equipped with Tychonoff topology, is homeomorphic to the Cantor set. Moreover, the map that replaces a $1$ with a $10$ is a homeomorphism from the set of all sequences on $\{0,1\}$ to the subspace consisting of sequences with no consecutive 1s. Moreover, the tail equivalence is dense on this space giving that the quotient space on the right hand side of our bijection above has the trivial topology. When a quotient space has ``bad topology'', one can instead take a groupoid approach and study them using $\CC^*$-algebras and $K$-theory. This is what Connes did with his noncommutative geometry and attached a noncommutative algebraic object to the space of Penrose tilings \cite{Connes}.

In the decades following Connes' influential book on noncommutative geometry, there have also been different approaches to establish a noncommutative version of algebraic geometry. One approach is to use Serre's equivalence \cite{Serre} between modules and quasicoherent sheaves. Indeed, one can reconstruct a scheme (up to isomorphism) from the abelian category of quasicoherent sheaves on it \cite{Gabriel,Rosenberg,Rosenberg-2}. To this end, given a $\ZZ$-graded noncommutative ring $A$, one considers the \textit{noncommutative space} whose category of quasi-coherent sheaves is given by the quotient $\mathrm{Qgr} A := \mathrm{Mod}^\ZZ A / \mathrm{Fdim} A$ of the category of $\ZZ$-graded left $A$-modules by the Serre subcategory of modules that are unions of their finite dimensional submodules. By definition, the objects of $\mathrm{Qgr} A$ is the same as objects of $\mathrm{Mod}^\ZZ A $ but there are more isomorphisms in this category: in particular, two modules $M,N$ are isomorphic when $M_n \cong N_n$ for all but finitely many $n$.

Using this philosophy, in \cite{Smith} Smith showed that the space of Penrose tilings can be thought of as a noncommutative algebraic space. Precisely, he established a categorical equivalence between $\mathrm{Qgr}B$ and the category of modules over Connes' ring where $B$ is the graded noncommutative algebra $\CC\langle x, y \rangle / (x^2)$ with $x$ and $y$ both living in degree 1. Moreover, he gave a correspondence between \textit{point modules} over $B$ and Penrose tilings using the sequences on $\{0,1\}$. This was a motivating factor for us given that Smith's ring is a noncommutative version of the $A_\infty$-singularity.

\subsection{Friezes}

Here we introduce friezes, for more detailed surveys see e.g. \cite{Baur, FaberICRA, Crossroads, Pressland-LMS}.  A \textit{Conway--Coxeter frieze pattern} is a grid of numbers consisting of finitely many rows as follows: the top and the bottom rows are infinite rows of zeroes and the second to top and second to bottom rows are infinite rows of ones. All the other entries $a_{ij}$ are positive integers satisfying the \textit{frieze rule} (or \textit{unimodular rule}) which we explain below. A frieze looks like this:
\begin{align} \label{Eq:CCfrieze}
    \xymatrix@=0.3em{
    \ldots && 0 && 0 && 0 && 0 &&   \ldots \\
    \ldots & 1 && 1 && 1 && 1 && 1 & \ldots \\
    \ldots && a_{0,2} && a_{1,3} && a_{2,4} && a_{3,5} && \ldots \\
    \ldots & a_{-1,2} && a_{0,3} && a_{1,4} && a_{2,5} && a_{3,6} & \ldots \\
    \ldots && \ddots && \ddots && \ddots && \ddots &&   \ldots \\
    \ldots & a_{-2,w-1} && a_{-1,w}&& a_{0,w+1} && a_{1,w+2} && a_{2,w+3} & \ldots \\
    \ldots && 1 && 1 && 1 && 1 &&   \ldots \\
    \ldots & 0 && 0 && 0 && 0 && 0 & \ldots
    }
\end{align}
and it satisfies the frieze rule $ad - bc = 1$ for every diamond
\begin{align*}
    \xymatrix@=0.3em{
        &b&\\
        a&&d\\
        &c&
    } 
\end{align*}
The number $w$ of nontrivial rows is called the \emph{width} of the frieze. The first nontrivial row (the row of $a_{i,j}$ with $j-i = 2$ above) is called the \textit{quiddity row} and it is periodic of period $w+3$ in the horizontal direction. 

To reconcile friezes with our notation for arcs in the $\infty$-gon, by rotation/reflection, we can write the above frieze as follows.
\begin{align} \label{Eq:array-frieze}
    \begin{array}{c|cccccccccc}
        &\ddots &\ddots&\ddots&\ddots&\ddots&\ddots&\ddots&&& \\
        0&0 & 1 & a_{0,2} & a_{0,3} & \ldots & a_{0,w+1} & 1 & 0 &  &\\
        1& & 0 & 1& a_{1,3} & a_{1,4} & \ldots & a_{1,w+2} & 1& 0 &  \\
        2&&& 0&1& a_{2,4} &a_{2,5} &\ldots&a_{2,w+3}&1& 0 \\
        3&&&&0&1& a_{3,5}&a_{3,6}&\ldots&a_{3,w+4}&1 \\
        4&&&&&0& 1&\ddots&\ddots&\ddots& \ddots
    \end{array}
\end{align}
This shape looks like the arrays corresponding to our triangulations introduced in Section \ref{Sub:comb-model}. Indeed, there exists a 1-1 correspondence between finite friezes and triangulations of finite polygons: 
\begin{theorem}[Conway--Coxeter \cite{CoCo1, CoCo2}]
	\label{Thm:CoCo}
There is a bijection between triangulated polygons with $m$ vertices and Conway--Coxeter frieze patterns of width $m-3$: 
\begin{enumerate}
\item Let $\mathcal{F}$ be a Conway--Coxeter frieze of width $m-3$ with the entries $a_{ij}$ as in \eqref{Eq:CCfrieze}. Then the entries $a_{i-1,i+1}$ for $ i \in \{ 1, \ldots, m \} $
of $\mathcal{F}$ give a triangulation of an $m$-gon $P$ with vertices $1, \ldots m$: the integer $a_{i-1,i+1}$ denotes the number of triangles of the triangulation incident to vertex $i$.
\item Conversely, let $P$ be a triangulated $m$-gon. Define the sequence $\{a_i\}_{i=1}^m$ with $a_i$ as the number of triangles incident  to the vertex $i$. Then the frieze of width $m-3$ corresponding to $P$ is the one with quiddity sequence $ \{ a_i \}_{i=1}^m $.
The remaining entries in the frieze can be calculated using the frieze rule and the horizontal periodicity.
\end{enumerate}
\end{theorem}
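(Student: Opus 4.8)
The plan is to prove both directions simultaneously by induction on the number of vertices $m$, using the existence of \emph{ears} in triangulations together with the dual fact that the entry $1$ must appear in the quiddity row of a positive-integer frieze. First I would dispose of the base case $m=3$: the unique triangulation of a triangle has each vertex incident to exactly one triangle, so its quiddity sequence is $(1,1,1)$, and one checks directly that the frieze rule together with the horizontal periodicity forces the degenerate width-$0$ frieze consisting only of the rows of $0$'s and $1$'s.

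For the inductive step of part (2) (triangulation $\to$ frieze) I would invoke the standard fact that any triangulation of an $m$-gon with $m \geq 4$ has an ear, i.e.\ a vertex $i$ incident to exactly one triangle, so that $a_i = 1$. Deleting that ear removes the triangle $\{i-1,i,i+1\}$ and produces a triangulation of an $(m-1)$-gon whose quiddity sequence is obtained from $(a_1,\ldots,a_m)$ by erasing the entry $a_i=1$ and decreasing each of $a_{i-1}$ and $a_{i+1}$ by one; by the inductive hypothesis this shorter sequence is the quiddity row of a frieze of width $m-4$. The heart of the step is a local insertion lemma: re-inserting an ear corresponds to a surgery on the frieze that adds one diagonal of entries, raises the width by exactly one, and preserves the unimodular rule. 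I would verify this by encoding the frieze through $2\times 2$ unimodular matrix products in consecutive quiddity entries, so that the quiddity reduction above lifts to the matrix identity
\[
\begin{pmatrix} a_{i-1} & -1 \\ 1 & 0\end{pmatrix}\begin{pmatrix} 1 & -1 \\ 1 & 0\end{pmatrix}\begin{pmatrix} a_{i+1} & -1 \\ 1 & 0\end{pmatrix} = \begin{pmatrix} a_{i-1}-1 & -1 \\ 1 & 0\end{pmatrix}\begin{pmatrix} a_{i+1}-1 & -1 \\ 1 & 0\end{pmatrix},
\]
which is precisely the compatibility that makes the reduction invisible to the global monodromy.

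For part (1) (frieze $\to$ triangulation) I would first argue that any positive-integer frieze of finite width $m-3$ has an entry equal to $1$ in its quiddity row: if every quiddity entry were $\geq 2$, the period product $\prod_{i} \left(\begin{smallmatrix} a_i & -1 \\ 1 & 0\end{smallmatrix}\right)$ would be expanding and could not equal $\pm\mathbb{I}$, contradicting the closing-up forced by finite width. Choosing such a $1$ lets me run the reduction of part (2) in reverse, apply the inductive hypothesis to obtain a triangulated $(m-1)$-gon, and glue an ear back in; tracking that this inverts the construction of part (2) yields the claimed bijection and the correct width count.

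The main obstacle I expect is not the local frieze rule, which is routine, but controlling the global closing-up condition, namely that the width is exactly $m-3$ and that the rows of $1$'s and $0$'s reappear in precisely the right place. This is where the passage from the combinatorial triangle count to the periodicity statement genuinely occurs, and it is cleanest to phrase through the monodromy identity $\prod_{i=1}^{m} \left(\begin{smallmatrix} a_i & -1 \\ 1 & 0\end{smallmatrix}\right) = -\mathbb{I}$, showing that this single matrix equation is simultaneously equivalent to ``$(a_i)$ triangulates an $m$-gon'' and to ``$(a_i)$ is the quiddity row of a width-$(m-3)$ frieze''; the base case $\left(\begin{smallmatrix} 1 & -1 \\ 1 & 0\end{smallmatrix}\right)^3 = -\mathbb{I}$ then anchors the induction.
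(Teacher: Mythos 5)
The paper does not actually prove Theorem \ref{Thm:CoCo}: it is recalled as a classical result and attributed directly to Conway and Coxeter \cite{CoCo1,CoCo2}, so there is no in-paper argument to compare yours against. Your outline is the standard proof of that classical theorem --- induction on $m$ using ears on the triangulation side and the forced occurrence of a $1$ in the quiddity row on the frieze side, with the local reduction controlled by the continuant matrices $M(t)=\left(\begin{smallmatrix} t & -1 \\ 1 & 0\end{smallmatrix}\right)$ and the global closing-up condition packaged as the monodromy identity $\prod_{i=1}^m M(a_i)=-\mathbb{I}$. The identity $M(a)M(1)M(b)=M(a-1)M(b-1)$ is correct, as is the base case $M(1)^3=-\mathbb{I}$, and the overall strategy is sound. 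Two details deserve explicit attention in a full write-up. First, in the frieze-to-triangulation direction you must check that deleting a quiddity entry equal to $1$ and decrementing its neighbours still leaves positive entries; this holds because for $m\ge 5$ two adjacent quiddity entries cannot both equal $1$ (the diamond rule applied just below the quiddity row would force an interior entry of the frieze to be $0$), while $m=4$ is handled by inspecting the unique width-$1$ friezes with quiddity $(1,2,1,2)$. Second, since the statement is a bijection, you need the two constructions to be mutually inverse, not merely well defined: the quiddity sequence determines the triangulation (by the same ear induction), and the quiddity row determines the frieze, which together give injectivity on both sides. With those points made explicit, your proposal is a complete and correct proof of the cited theorem.
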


Further note that the entries $a_{ij}$ in the frieze correspond to the arcs $(i,j)$ in the polygon. For a given triangulation of a polygon, one may plug in $a_{ij}=1$ for each arc $(i,j)$ in the triangulation, as we did in Example \ref{running-example} when we introduced the arrays. From this data and the frieze rule one can compute the remaining entries. Thus putting $1$'s for each arc (diagonal) in the triangulation, determines the frieze.

\section{Penrose tilings and friezes from triangulations of the $\infty$-gon}  \label{Sec:Penrose-friezes}

    In this section, we will study the relation between Penrose tilings and the Frobenius category $\mc{C}_2$ of maximal Cohen-Macaulay modules over the $A_\infty$-singularity. We will also see how one can go from a frieze to a Penrose tiling. Following Remark \ref{remark:triangulations-of-natural-numbers}, we will start with looking at how to get a Penrose tiling from a triangulation of natural numbers.

\subsection{Penrose tilings from right fountains } \label{Sub:Penrose-triang}
Let $R$ be a  triangulation with a right fountain of the $\infty$-gon. We call such a triangulation $R$  a \emph{right fountain triangulation}. 
Assume that it has fountain point $0$ so that we restrict ourselves to triangulations of natural numbers. Associated to $R$, there are two natural special types of arcs in $R$. 
\begin{definition}
    An arc $(a,b)$ in $R$ is a \textit{fountain arc} if $a = 0$. We denote the set of fountain arcs of $R$ by $A_1(R)$.
\end{definition}
Note that, by definition, the set of fountain arcs of $R$ is (countably) infinite. Hence, it naturally defines a sequence $\xx^R$ on the two element set $\{0,1\}$ as follows:

\begin{align*}
    \xx^R_n = \begin{cases} 1 \quad \text{if } (0,n+1) \in R \\ 0 \quad \text{else.} \end{cases}
\end{align*}
Note that the sequence starts by checking whether $(0,2)$ is in $R$ as $(0,1)$ is contained in any triangulation. The second sequence $\yy^R$ given by the set of fountain arcs of $R$ is a strictly increasing sequence of positive integers and it is defined by the rule
\begin{align*}
    \yy^R_n = \text{the } (n+1)\text{st positive integer } \ell \text{ such that } (0,\ell) \in R.
\end{align*}

\begin{remark}
    A fountain arc $(0,n)$ corresponds to the graded module $(x, y^{n-1})(1-n)$ in $\mc{C}_2$ under our identification. Hence, the smallest power of $y$ which lies in this module lives in degree zero. In fact, the converse is also true. If the smallest power of $y$ which lies in a module of the form $(x,y^k)(j)$ lives in degree zero, then the corresponding arc is a fountain arc. Note that this works only when we take 0 to be our fountain point.
\end{remark}

\begin{example}\label{x-y-sequence-example}
    Let $R$ be as in Example \ref{running-example}. Then, we have 
    \begin{align*}
        \xx^R &= 0,1,0,0,0,1,1,\ldots \\
        \yy^R &= 3,7,8 ,\ldots \ .
    \end{align*}
Note that the sequence $\xx^R$ is given by the first row of the array in Example \ref{running-example} starting from the second column and the sequence $\yy^R$ records the column numbers whose first row contains a $1$.
\end{example}
\begin{example}
    One can construct $\xx^R$ from $\yy^R$ and vice versa. For instance, if
    \begin{align*}
        \xx^R = 1,0,0,0,0,1, 0, 0, 1,0,1,0, 0 \ldots
    \end{align*}
    we see that $(0,2), (0,7), (0,10), (0,12) \in R$ and consequently, we get 
    \begin{align*}
        \yy^R = 2, 7,10,12, \ldots
    \end{align*}
\end{example}
We will define the second special class of arcs in $R$ with the help of the next lemma.
\begin{lemma}\label{second-type-arcs}
    For any $n \geq 1$, the right fountain triangulation $R$ contains the arc $(\yy^R_n, \yy^R_{n+1})$. 
\end{lemma}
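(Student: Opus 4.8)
The plan is to argue directly from the definition of a triangulation as a \emph{maximal} set of pairwise noncrossing arcs. Write $a := \yy^R_n$ and $b := \yy^R_{n+1}$. By construction $(0,a)$ and $(0,b)$ are fountain arcs of $R$ with $2 \le a < b < \infty$, and the decisive consequence of $a$ and $b$ being \emph{consecutive} values of the increasing sequence $\yy^R$ is that there is no fountain arc $(0,\ell) \in R$ with $a < \ell < b$. The goal is to show $(a,b) \in R$.

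First I would record the reduction: since an arc never crosses another arc with which it shares an endpoint, $(a,b)$ crosses neither $(0,a)$ nor $(0,b)$. More importantly, if I can show that $(a,b)$ crosses \emph{no} arc of $R$ whatsoever, then $R \cup \{(a,b)\}$ is again a set of noncrossing arcs, and maximality of the triangulation $R$ forces $(a,b) \in R$. So the whole statement reduces to verifying that $(a,b)$ is noncrossing with every arc of $R$.

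To this end, suppose toward a contradiction that $(a,b)$ crosses some arc $(c,d) \in R$, where possibly $d = \infty$. By the crossing condition there are exactly two cases. If $a < c < b < d$ (with $d$ possibly $\infty$), then since $c > a \ge 2 > 0$ we have $0 < c < b < d$, which is precisely the condition for $(c,d)$ to cross the arc $(0,b) \in R$ — a contradiction. If instead $c < a < d < b$ (so $d$ is finite), I split on the value of $c$: if $c > 0$, then $0 < c < a < d$ shows that $(c,d)$ crosses $(0,a) \in R$, again a contradiction; while if $c = 0$, then $(0,d)$ is itself a fountain arc with $a < d < b$, contradicting the consecutiveness of $\yy^R_n$ and $\yy^R_{n+1}$. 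In every case we reach a contradiction, so $(a,b)$ crosses nothing in $R$, and therefore $(a,b) \in R$.

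This is essentially a direct case analysis on the crossing relation, so I do not expect a genuine obstacle; the only points needing care are the uniform treatment of an infinite crossing arc $(c,\infty)$ (handled identically, once one notes that crossing $(a,b)$ forces $a < c < b$, hence crossing $(0,b)$ as well) and the boundary subcase $c = 0$, where the contradiction comes not from a crossing but from manufacturing an intermediate fountain arc. An alternative, more geometric route would be to single out the unique triangle of $R$ incident to $(0,b)$ on the side of the finite polygon on vertices $0,1,\dots,b$ and show its apex must equal $a$; I would nonetheless keep the crossing argument as the primary proof, since it relies only on the definition of a triangulation and avoids invoking the local triangle structure.
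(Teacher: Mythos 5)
Your argument is correct, but it takes a genuinely different route from the paper. The paper's proof is local and geometric: it observes that the arc $(0,\yy^R_n)$ must be an edge of a triangle of $R$ on the side away from the fountain, that the second edge through $0$ of that triangle must be $(0,v)$ for $v$ an adjacent fountain endpoint, and hence that the third edge $(\yy^R_n,\yy^R_{n+1})$ lies in $R$ --- exactly the ``alternative'' you sketch and set aside at the end. You instead argue from maximality: you show that $(\yy^R_n,\yy^R_{n+1})$ crosses no arc of $R$, using the fountain arcs $(0,\yy^R_n)$ and $(0,\yy^R_{n+1})$ as witnesses in the case $a<c<b<d$ and in the subcase $c>0$ of $c<a<d<b$, and using consecutiveness of the $\yy^R$-sequence in the subcase $c=0$; maximality then forces the arc into $R$. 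Your crossing case analysis is complete under the paper's standing convention (Remark \ref{remark:triangulations-of-natural-numbers}) that one restricts to arcs $(a,b)$ with $0\leq a<b$; if one insisted on working in the full completed $\infty$-gon, the one case you omit, $c<0<a<d<b$, is dispatched by the same trick, since then $c<0<d<b$ shows $(c,d)$ crosses $(0,\yy^R_{n+1})$. The trade-off is that your proof relies only on the definition of a triangulation as a maximal noncrossing collection and so avoids having to justify that every arc of an infinite triangulation is actually the edge of a triangle on each side (a fact the paper's proof implicitly uses), whereas the paper's proof additionally identifies the full triangle $(0,\yy^R_n,\yy^R_{n+1})$, which is what Remark \ref{polygon-remark} and the later mutation lemmas actually exploit.
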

\begin{proof}
    We know that the arc $(0,\yy^R_n)$ is in $R$, by definition. Thus, it has to be contained in a triangle. Since $0$ is a vertex of this triangle, there has to be another arc $(0,v)$ that is contained in this triangle. By our definition of $\yy^R$, there are two possibilities: $v = \yy^R_{n-1}$ (if $n > 2$) and $v = (\yy^R)_{n+1}$. The third edge of these two possibilities are $(\yy^R_{n-1}, \yy^R_{n})$ (if $n > 2$) and $(\yy^R_n, \yy^R_{n+1})$. Both of these triangles belong in $R$. 
\end{proof}
\begin{remark} \label{Rmk:y0}
    The assertion of Lemma \ref{second-type-arcs} can be extended for $\yy^R_0=1$.
\end{remark}
In other words, Lemma \ref{second-type-arcs} states that if $(0,a)$ and $(0,b)$ are fountain arcs in $R$ such that there does not exist any other fountain arcs $(0,x)$ with $a < x < b$, then the arc $(a,b)$ must belong to $R$. This allows us to make the next definition.
\begin{definition}
    An arc $(a,b)$ in $R$ is said to belong to $A_2(R)$ if for some $n \geq 1$ we have $a = \yy^R_n$ and $b = \yy^R_{n+1}$.
\end{definition}

\begin{remark}\label{polygon-remark}
We remark that a right fountain triangulation $R$ contains the polygons $P_n$ with finitely many vertices $(\yy^R_n,\yy^R_n + 1, \ldots, \yy^R_{n+1} - 1, \yy^R_{n+1})$ and a triangulation of these polygons.
\end{remark}

Recall that mutation was defined in Section \ref{Sub:comb-model}. We start with the effect of a mutation at a fountain arc on the sequence $\xx$.
\begin{lemma}
    Let $m$ be a positive integer and $\mu$ be the mutation of the right fountain triangulation $R$ at the fountain arc $(0,\yy^R_m)$. Then, we have
    \begin{align*}
        \xx^{\mu(R)}_n = \begin{cases}
            \xx^R_n \quad  & \text{ if } n \neq \yy^R_m \\ 
            0 \quad & \text{ if } n = \yy^R_m.
        \end{cases}
    \end{align*}
    In particular, the two sequences $\xx^R$ and $\xx^{\mu(R)}$ differ in exactly one place.
\end{lemma}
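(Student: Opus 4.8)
The plan is to analyze how a single mutation at the fountain arc $(0,\yy^R_m)$ affects the set of fountain arcs $A_1(R)$, since the sequence $\xx$ is by definition just the indicator function of $A_1$. First I would use Remark \ref{polygon-remark} and Lemma \ref{second-type-arcs} to identify the unique quadrilateral in which the arc $(0,\yy^R_m)$ sits as a diagonal. The arc $(0,\yy^R_m)$ is a fountain arc with $m\geq 1$, and since it is nonboundary, it is contained as a diagonal in a unique quadrilateral. By the reasoning in the proof of Lemma \ref{second-type-arcs}, the two fountain arcs adjacent to $(0,\yy^R_m)$ are $(0,\yy^R_{m-1})$ and $(0,\yy^R_{m+1})$, and the corresponding $A_2$-arcs $(\yy^R_{m-1},\yy^R_m)$ and $(\yy^R_m,\yy^R_{m+1})$ both lie in $R$. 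Thus the quadrilateral containing $(0,\yy^R_m)$ as a diagonal has vertices $0$, $\yy^R_{m-1}$, $\yy^R_m$, $\yy^R_{m+1}$ (for $m=1$ one uses Remark \ref{Rmk:y0} to take $\yy^R_0=1$).

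Next I would compute the flip. Mutating at $(0,\yy^R_m)$ replaces this diagonal with the other diagonal of the quadrilateral, namely $(\yy^R_{m-1},\yy^R_{m+1})$. The key observation is that this new arc $(\yy^R_{m-1},\yy^R_{m+1})$ is \emph{not} a fountain arc, since its smaller endpoint $\yy^R_{m-1}$ is positive, and conversely the removed arc $(0,\yy^R_m)$ \emph{was} a fountain arc. Moreover, no other fountain arc is added or removed by the flip, because mutation changes exactly one arc. Therefore the set of fountain arcs changes only by deleting $(0,\yy^R_m)$: we have $A_1(\mu(R)) = A_1(R)\setminus\{(0,\yy^R_m)\}$.

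Finally I would translate this back to the sequence. Since $\xx^R_n = 1$ exactly when $(0,n+1)\in R$, deleting the fountain arc $(0,\yy^R_m)$ flips precisely the entry indexed by $n+1 = \yy^R_m$, i.e.\ the entry $\xx$ at position $n = \yy^R_m$ changes from $1$ to $0$, and all other entries are unchanged. This is exactly the claimed formula, and it shows the two sequences differ in exactly one place. The only genuinely delicate point — the step I would treat most carefully — is confirming that the quadrilateral is precisely the one described and that its opposite diagonal $(\yy^R_{m-1},\yy^R_{m+1})$ is a legal arc that yields a triangulation (so that the mutation is well-defined); this follows from the uniqueness-of-quadrilateral statement recalled in Section \ref{Sub:comb-model} together with Lemma \ref{second-type-arcs}, but one should verify the boundary edge case $m=1$ separately using Remark \ref{Rmk:y0}.
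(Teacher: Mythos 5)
Your proof is correct and follows essentially the same route as the paper's: identify the quadrilateral with vertices $0,\yy^R_{m-1},\yy^R_m,\yy^R_{m+1}$ (using $\yy^R_0=1$ for the case $m=1$), flip the diagonal to $(\yy^R_{m-1},\yy^R_{m+1})$, and observe that exactly one fountain arc is removed and none is added, so the sequence changes in exactly one place. The only quibble is the internal slip in your final step, where ``the entry indexed by $n+1=\yy^R_m$'' is immediately restated as ``the entry at position $n=\yy^R_m$''; this off-by-one mirrors an ambiguity already present in the paper's own statement and does not affect the conclusion.
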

\begin{proof}
    The mutation $\mu$ takes place on the quadrilateral with vertices $0, y_{m-1}, y_m$ and  $y_{m+1}$. It replaces the arc $(0,\yy_m^R)$ with the arc $( \yy_{m-1}^R, \yy_{m+1}^R)$ (we can put $\yy^R_0 = 1$ to deal with the $m=1$ case, cf.~Remark \ref{Rmk:y0}).

    By definition, we have $\xx^R_{\yy^R_m} = 1$. After mutation, we see that $\xx^{\mu(R)}_{\yy^R_m} = 0$. It does not change any of the other arcs and hence does not change any other spots in the sequence.
\end{proof}
\begin{example}
    Let $R$ be as in Example \ref{running-example} and let us mutate at $(0,3)$.

    \begin{minipage}{.5\textwidth}
        \begin{tikzpicture}
    
            \draw[thick] (0,0) -- (9,0);
            
            \foreach \x in {0, 1, 2, 3, 4, 5, 6, 7, 8} {
                \node at (\x, -0.3) {\x};
            }
        
            \draw[thick] (0,0) to[bend left=40] (3,0);
            \draw[thick] (0,0) to[bend left=50] (7,0);
            \draw[thick] (0,0) to[bend left=60] (8,0);
            \draw[thick] (1,0) to[bend left=40] (3,0);
            \draw[thick] (3,0) to[bend left=40] (5,0);
            \draw[thick] (3,0) to[bend left=40] (7,0);
            \draw[thick] (5,0) to[bend left=40] (7,0);
            
\fill[gray!20] (1,0) to[bend left=40] (3,0)
               -- (3,0) to[bend right=40] (0,0)
               -- cycle;
\fill[gray!20] (0,0) to[bend left=40] (3,0)
to[bend left=40] (7,0)
-- (7,0) to[bend right=50] (0,0)
-- cycle;

        \end{tikzpicture}        
    \end{minipage}%
    \begin{minipage}{.2\textwidth}
        \phantom{x}
    \end{minipage}
    \begin{minipage}{.3\textwidth}

        \begin{tikzpicture}
        
            \draw[thick] (0,0) -- (2,0) -- (2,2) -- (0,2) -- (0,0);
            \node at (0, -0.3) {0};
            \node at (2, -0.3) {1};
            \node at (2, 2.3) {3};
            \node at (0, 2.3) {7};

            \draw[thick] (0,0) -- (2,2);
        
        \end{tikzpicture}

    \end{minipage}
    
    The shaded area on the left is the quadrilateral on the right with the diagonal $(0,3)$. By flipping the diagonal, we get

    \begin{minipage}{.3\textwidth}
        \begin{tikzpicture}
        
            \draw[thick] (0,0) -- (2,0) -- (2,2) -- (0,2) -- (0,0);
            \node at (0, -0.3) {0};
            \node at (2, -0.3) {1};
            \node at (2, 2.3) {3};
            \node at (0, 2.3) {7};

            \draw[thick] (0,2) -- (2,0);
        
        \end{tikzpicture}
        
    \end{minipage}
    \begin{minipage}{.5\textwidth}
        \begin{tikzpicture}
    
            \draw[thick] (0,0) -- (9,0);
            
            \foreach \x in {0, 1, 2, 3, 4, 5, 6, 7, 8} {
                \node at (\x, -0.3) {\x};
            }
        
            \draw[thick] (1,0) to[bend left=50] (7,0);
            \draw[thick] (0,0) to[bend left=50] (7,0);
            \draw[thick] (0,0) to[bend left=60] (8,0);
            \draw[thick] (1,0) to[bend left=40] (3,0);
            \draw[thick] (3,0) to[bend left=40] (5,0);
            \draw[thick] (3,0) to[bend left=40] (7,0);
            \draw[thick] (5,0) to[bend left=40] (7,0);
            
        \end{tikzpicture}        
    \end{minipage}%

    and we have $        \xx^R = 0,1,0,0,0,1,1,\ldots  \rightsquigarrow \xx^{\mu(R)} = 0,0,0,0,0,1,1 \ldots$

\end{example}

Next, we see how mutating at an arc in $A_2(R)$ changes the sequence $\xx$.
\begin{lemma}
    Let $m$ be a positive integer and $\mu$ be the mutation of $R$ at the arc $(\yy^R_m, \yy^R_{m+1})$. Then, the two sequences $\xx^R$ and $\xx^{\mu(R)}$ differ in exactly one place.
\end{lemma}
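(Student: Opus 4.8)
The plan is to locate the unique quadrilateral in which $(\yy^R_m,\yy^R_{m+1})$ sits as a diagonal, perform the flip, and then read off the effect on $\xx$. First I would identify the two triangles of $R$ adjacent to the arc $(\yy^R_m,\yy^R_{m+1})$. On the side facing the fountain point, the proof of Lemma \ref{second-type-arcs} already exhibits the triangle with vertices $0, \yy^R_m, \yy^R_{m+1}$: since $(0,\yy^R_m)$ and $(0,\yy^R_{m+1})$ are consecutive fountain arcs, together with $(\yy^R_m,\yy^R_{m+1})$ they bound a triangle of $R$. On the other side the arc faces the polygon $P_m$ of Remark \ref{polygon-remark} with vertices $\yy^R_m, \yy^R_m+1, \dots, \yy^R_{m+1}$, and the triangle of $R$ lying against $(\yy^R_m,\yy^R_{m+1})$ inside $P_m$ has a third vertex $w$ with $\yy^R_m < w < \yy^R_{m+1}$. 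Here I use that, for the mutation to be defined, $(\yy^R_m,\yy^R_{m+1})$ must be a nonboundary arc, so $\yy^R_{m+1}\geq \yy^R_m+2$ and such a $w$ exists.

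Next I would carry out the flip. The four vertices occur in the cyclic order $0, \yy^R_m, w, \yy^R_{m+1}$ on the boundary, so they form precisely the quadrilateral in which $(\yy^R_m,\yy^R_{m+1})$ is a diagonal. Mutation replaces this diagonal with the other one, giving
\[
\mu(R) = \bigl(R \setminus \{(\yy^R_m,\yy^R_{m+1})\}\bigr) \cup \{(0,w)\}.
\]

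Finally I would compare the two sequences. The deleted arc $(\yy^R_m,\yy^R_{m+1})$ is not a fountain arc, since its smaller endpoint $\yy^R_m$ is at least $1$; hence removing it leaves $\xx$ unchanged. The inserted arc $(0,w)$ is a fountain arc, so it forces $\xx^{\mu(R)}_{w-1}=1$. Because $w$ lies strictly between the consecutive fountain values $\yy^R_m$ and $\yy^R_{m+1}$, the arc $(0,w)$ was not present in $R$, i.e.\ $\xx^R_{w-1}=0$. No other arc of $R$ is affected, so $\xx^R$ and $\xx^{\mu(R)}$ agree everywhere except at position $w-1$, where the entry flips from $0$ to $1$; thus the two sequences differ in exactly one place.

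The main obstacle I anticipate is pinning down the quadrilateral correctly, specifically verifying that the triangle on the far side of $(\yy^R_m,\yy^R_{m+1})$ lies inside $P_m$ so that its apex $w$ satisfies $\yy^R_m < w < \yy^R_{m+1}$, together with handling the boundary-arc case (which is excluded precisely because boundary arcs are not mutable). Once the quadrilateral and the new diagonal $(0,w)$ are identified, the remainder is a direct bookkeeping of which endpoints equal the fountain point $0$; I note the pleasant symmetry with the previous lemma, where mutation at a fountain arc flipped an entry from $1$ to $0$, whereas here it flips from $0$ to $1$.
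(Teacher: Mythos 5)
Your proof is correct and follows essentially the same route as the paper: identify the quadrilateral with vertices $0,\yy^R_m,w,\yy^R_{m+1}$ (the paper calls your $w$ by $\ell$), flip the diagonal to $(0,w)$, and observe that exactly one entry of $\xx$ changes from $0$ to $1$. Your treatment is in fact slightly more careful than the paper's, both in justifying why the far-side triangle lies in $P_m$ and in recording the changed position as $w-1$, which matches the stated indexing convention $\xx^R_n=1 \iff (0,n+1)\in R$, whereas the paper writes the position as $\ell$.
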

\begin{proof}
    We consider the polygon $P_m$ with vertices $\yy^R_m, \yy^R_m + 1, \ldots, \yy^R_{m+1} - 1 , \yy^R_{m+1}$. Since the arc $(\yy^R_m, \yy^R_{m+1})$ belongs to this polygon, it is contained in a triangle from the triangulation of $P_m$ that is induced by $R$. Say we have the triangle ($\yy^R_m, \ell, \yy^R_{m+1}$) with $\yy^R_m < \ell < \yy^R_{m+1}$ in this triangulation. Then, locally the action of $\mu$ is to replace the arc $(\yy^R_m, \yy^R_{m+1})$ with the arc $(0,\ell)$ in the quadrilateral with vertices $0,\yy^R_m,\ell$ and $\yy^R_{m+1}$. Hence, we have
    \begin{align*}
        \xx^R_\ell = 0 \quad \text{and} \quad \xx^{\mu(R)}_\ell = 1
    \end{align*}
    and the sequence remains unchanged in other places.
\end{proof}
\begin{example}
    Once again,  let $R$ be the right fountain triangulation in Example \ref{running-example} and this time let us mutate at $(3,7)$. The number $\ell$ in the proof is 5.

    \begin{minipage}{.5\textwidth}
        \begin{tikzpicture}
    
            \draw[thick] (0,0) -- (9,0);
            
            \foreach \x in {0, 1, 2, 3, 4, 5, 6, 7, 8} {
                \node at (\x, -0.3) {\x};
            }
        
            \node at (0.5, 2) {$R$};

            \draw[thick] (0,0) to[bend left=40] (3,0);
            \draw[thick] (0,0) to[bend left=50] (7,0);
            \draw[thick] (0,0) to[bend left=60] (8,0);
            \draw[thick] (1,0) to[bend left=40] (3,0);
            \draw[thick] (3,0) to[bend left=40] (5,0);
            \draw[thick] (3,0) to[bend left=40] (7,0);
            \draw[thick] (5,0) to[bend left=40] (7,0);
            
\fill[gray!20] (3,0) to[bend left=40] (5,0) -- (5,0) to[bend left=40] (7,0)
               -- (7,0) to[bend right=40] (3,0)
               -- cycle;
\fill[gray!20] (0,0) to[bend left=40] (3,0)
to[bend left=40] (7,0)
-- (7,0) to[bend right=50] (0,0)
-- cycle;

        \end{tikzpicture}        
    \end{minipage}%
    \begin{minipage}{.2\textwidth}
        \phantom{x}
    \end{minipage}
    \begin{minipage}{.3\textwidth}

        \begin{tikzpicture}
        
            \draw[thick] (0,0) -- (2,0) -- (2,2) -- (0,2) -- (0,0);
            \node at (0, -0.3) {0};
            \node at (2, -0.3) {3};
            \node at (2, 2.3) {5};
            \node at (0, 2.3) {7};

            \draw[thick] (0,2) -- (2,0);
        
        \end{tikzpicture}

    \end{minipage}

    \begin{minipage}{.5\textwidth}
        \begin{tikzpicture}
    
            \draw[thick] (0,0) -- (9,0);
            
            \foreach \x in {0, 1, 2, 3, 4, 5, 6, 7, 8} {
                \node at (\x, -0.3) {\x};
            }
        
            \node at (0.5, 2) {$\mu(R)$};

            \draw[thick] (0,0) to[bend left=40] (3,0);
            \draw[thick] (0,0) to[bend left=50] (7,0);
            \draw[thick] (0,0) to[bend left=60] (8,0);
            \draw[thick] (1,0) to[bend left=40] (3,0);
            \draw[thick] (3,0) to[bend left=40] (5,0);
            \draw[thick] (0,0) to[bend left=50] (5,0);
            \draw[thick] (5,0) to[bend left=40] (7,0);

        \end{tikzpicture}        
    \end{minipage}%
    \begin{minipage}{.2\textwidth}
        \phantom{x}
    \end{minipage}
    \begin{minipage}{.3\textwidth}

        \begin{tikzpicture}
        
            \draw[thick] (0,0) -- (2,0) -- (2,2) -- (0,2) -- (0,0);
            \node at (0, -0.3) {0};
            \node at (2, -0.3) {3};
            \node at (2, 2.3) {5};
            \node at (0, 2.3) {7};

            \draw[thick] (0,0) -- (2,2);
        
        \end{tikzpicture}

    \end{minipage}
\end{example}

These are all the mutations that change the sequence $\xx$ as we see in the following lemma.
\begin{lemma}
    Let $\mu$ be a mutation of $R$ at an arc which is not of the form $(0,\yy^R_n)$ or $(\yy^R_n, \yy^R_{n+1})$. Then, we have $\xx^R = \xx^{\mu(R)}$. 
\end{lemma}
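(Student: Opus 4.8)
The plan is to exploit that $\xx^R$ is determined entirely by one piece of data attached to $R$ — its finite fountain arcs $\{(0,\ell)\in R : \ell\geq 1\}$ — together with the fact that a mutation alters $R$ in exactly one arc. Since $\mu$ replaces $\gamma$ by a single arc $\gamma'$ and fixes every other arc of $R$, it suffices to show that neither $\gamma$ nor $\gamma'$ is a finite fountain arc; the set of finite fountain arcs of $\mu(R)$ then coincides with that of $R$, and $\xx^{\mu(R)}=\xx^R$ follows immediately from the definition of the sequence $\xx$.

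That $\gamma$ is not a fountain arc follows from the hypothesis together with a description of the arcs of $R$ having left endpoint $0$: apart from the boundary arc $(0,1)$ and the infinite arc $(0,\infty)$, these are precisely the non-boundary fountain arcs, each of the form $(0,\yy^R_m)$. The hypothesis excludes every arc of the latter form, while the boundary and infinite arcs are not diagonals of any quadrilateral and hence admit no mutation. Thus $\gamma=(a,b)$ is a finite nonboundary arc with $a\geq 1$.

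The crux is to confine $\gamma$, and with it the entire mutation, inside one of the polygons $P_n$ of Remark \ref{polygon-remark}. Writing $\yy^R_0=1<\yy^R_1<\yy^R_2<\cdots$ for the fountain points, let $n$ be the largest index with $\yy^R_n\leq a$. Since $\gamma$ cannot cross $(0,\yy^R_{n+1})\in R$ and $a<\yy^R_{n+1}$ by maximality of $n$, we must have $b\leq\yy^R_{n+1}$, so $\yy^R_n\leq a<b\leq\yy^R_{n+1}$ and $\gamma$ is a chord of $P_n$. As $\gamma$ is neither the closing arc $(\yy^R_n,\yy^R_{n+1})$ (an $A_2$-arc, excluded) nor a boundary arc, it is an interior diagonal of $P_n$. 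A short crossing argument then shows that the two triangles of $R$ incident to $\gamma$ have their apexes in $\{\yy^R_n,\dots,\yy^R_{n+1}\}$ as well: an apex lying outside this range, or equal to $0$ or $\infty$, would yield an edge of $R$ crossing $(0,\yy^R_n)$ or $(0,\yy^R_{n+1})$, or would force $\gamma$ to be the excluded $A_2$-arc. Hence the mutation quadrilateral of $\gamma$ lies entirely inside $P_n$.

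Consequently the flipped arc $\gamma'$ is again a diagonal of $P_n$, so $\gamma'=(c,d)$ with $c\geq\yy^R_n\geq 1$; in particular $\gamma'$ is not a fountain arc. Therefore the finite fountain arcs of $\mu(R)$ are exactly those of $R$, whence $\xx^{\mu(R)}=\xx^R$. I expect the one genuine obstacle to be the verification that the mutation quadrilateral stays inside $P_n$ — that is, that the flip is a purely local operation within the polygon; once this is established, everything else is routine bookkeeping with the definitions of $\xx^R$, of fountain arcs, and of $A_2$-arcs.
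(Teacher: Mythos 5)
Your proof is correct and follows essentially the same route as the paper: the key point in both is that any arc not of the form $(0,\yy^R_n)$ or $(\yy^R_n,\yy^R_{n+1})$ lies inside one of the polygons $P_n$ of Remark \ref{polygon-remark}, so the mutation is a local flip within $P_n$ that cannot create or destroy a fountain arc. You simply spell out the containment and locality arguments (via the crossing conditions with $(0,\yy^R_n)$ and $(0,\yy^R_{n+1})$) that the paper leaves implicit.
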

\begin{proof}
    Any arc that is not of the form $(0,\yy^R_n)$ or $(\yy^R_n, \yy^R_{n+1})$ must appear as an arc of one of the polygons $P_n$ in \ref{polygon-remark}. Then, locally the action of $\mu$ is only a mutation of $P_n$ without changing any of its boundaries. Therefore, the sequence $\xx^R$ remains unchanged.
\end{proof}
By virtue of these three lemmas, we propose the following equivalence relation.
\begin{definition}
We say that two right fountain triangulations $R_1$ and $R_2$ are equivalent and write $R_1 \sim R_2$ if $R_1$ can be mutated into $R_2$ by using only finitely many mutations at arcs of the form $(0,\yy^R_n)$ or $(\yy^R_n, \yy^R_{n+1})$ and possibly infinitely many mutations at other arcs.
\end{definition}

This definition gives us the language to state the following theorem which follows from the preceding lemmas.
\begin{theorem} \label{Thm:fountains-equiv}
    Two right fountain triangulations $R_1$ and $R_2$ are equivalent if and only if the sequences $\xx^{R_1}$ and $\xx^{R_2}$ differ in finitely many places.
\end{theorem}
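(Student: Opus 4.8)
The plan is to prove both implications. The forward direction is essentially immediate from the three lemmas preceding the definition of $\sim$, while the reverse direction requires reducing to a finite polygon.

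For the forward direction, suppose $R_1 \sim R_2$, so there is a sequence of mutations carrying $R_1$ to $R_2$ in which only finitely many are performed at arcs of the special form $(0,\yy_n)$ or $(\yy_n,\yy_{n+1})$, the remaining being at other arcs. By the preceding lemmas each special mutation alters the sequence $\xx$ in exactly one position, whereas every other mutation leaves $\xx$ unchanged. Since, for each fixed index $k$, the entry $\xx_k$ records only whether the fountain arc $(0,k+1)$ is present, it is untouched by all the other-arc mutations and can be modified only by one of the finitely many special mutations; thus the value $\xx_k$ is well defined along the whole (possibly infinite) sequence and equals its initial value except in finitely many positions. Hence $\xx^{R_1}$ and $\xx^{R_2}$ differ in at most finitely many places.

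For the reverse direction, assume $\xx^{R_1}$ and $\xx^{R_2}$ agree for all $n > N$. Since every right fountain has infinitely many fountain arcs, I can pick a position $p > N$ with $\xx^{R_1}_p = \xx^{R_2}_p = 1$ and set $L = p+1$, so that the arc $(0,L)$ lies in both $R_1$ and $R_2$ and all fountain arcs $(0,\ell)$ with $\ell > L$ coincide. Together with the boundary path from $0$ to $L$, the arc $(0,L)$ cuts off a finite polygon $Q$ on vertices $0,1,\dots,L$, and the arcs of $R_i$ contained in $Q$ form a triangulation $R_i|_Q$ of $Q$. By the connectivity of the flip graph of triangulations of a convex polygon, $R_1|_Q$ can be transformed into $R_2|_Q$ by finitely many flips; each flip is a mutation of the ambient triangulation at an interior diagonal of $Q$ (whose defining quadrilateral lies entirely inside $Q$), so this turns $R_1$ into a triangulation $R_1'$ that agrees with $R_2$ inside $Q$ and with $R_1$ outside $Q$. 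As this is a finite sequence of mutations, only finitely many of them are at special arcs.

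It remains to pass from $R_1'$ to $R_2$ using only other-arc mutations. After the previous step the fountain arcs of $R_1'$ coincide with those of $R_2$: those indexed $\le L$ because $R_1'$ matches $R_2$ inside $Q$, and those indexed $> L$ by the choice of $L$. Consequently $R_1'$ and $R_2$ have the same $A_2$ arcs and the same polygons $P_n$ (Lemma~\ref{second-type-arcs} and Remark~\ref{polygon-remark}), and they already agree on the polygons lying inside $Q$. For each of the remaining polygons $P_n$ beyond $L$, the induced triangulations in $R_1'$ and $R_2$ are two triangulations of the same finite polygon, hence related by finitely many flips at interior arcs, all of which are other-arc mutations. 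Since these polygons are pairwise disjoint, every arc is affected by only finitely many such flips, so this possibly infinite collection of mutations is well defined and carries $R_1'$ to $R_2$; combined with the finitely many special mutations above, this gives $R_1 \sim R_2$. The step requiring the most care is exactly this last one: organizing the infinitely many tail mutations into a well-defined transformation landing precisely on $R_2$, which works because they are localized to disjoint polygons.
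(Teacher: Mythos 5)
Your proposal is correct. The paper itself gives no proof of this theorem beyond the remark that it ``follows from the preceding lemmas,'' and your forward direction is exactly the intended argument: each of the finitely many mutations at arcs of the form $(0,\yy_n)$ or $(\yy_n,\yy_{n+1})$ changes $\xx$ in one place, all other mutations fix $\xx$, and since each coordinate $\xx_k$ is touched only by the special mutations, the (possibly infinite) composite is well defined coordinatewise. Where you genuinely add value is the converse, which the paper asserts without any construction: your reduction to the finite polygon $Q$ cut off by a common fountain arc $(0,L)$ with $L$ beyond the disagreement range, flip-connectivity inside $Q$ (finitely many mutations, hence finitely many special ones), and then the observation that $R_1'$ and $R_2$ share all fountain arcs, hence the same $\yy$-sequence, the same $A_2(\cdot)$, and the same polygons $P_n$ of Remark~\ref{polygon-remark}, so the remaining discrepancies are confined to the pairwise-disjoint interiors of the $P_n$ and can be resolved by infinitely many localized other-arc flips. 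The one point you rightly flag --- that the infinite tail of mutations must be organized so that each arc is affected only finitely often --- is handled correctly by the disjointness of the polygons, and is at the same level of rigor as the paper's own (implicit) treatment of composite mutations. No gap.
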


The following lemma follows from the definitions of our sequences.
\begin{lemma}
    Given a right fountain triangulation $R$. For any $n \geq 1$, the following are equivalent.
    \begin{enumerate}
        \item We have $\xx^R_n = \xx^R_{n+1} = 1$.
        \item We have $\yy^R_{n+1} - \yy^R_n = 1$.
        \item The arc $(\yy^R_n, \yy^R_{n+1})$ is a boundary arc.
    \end{enumerate}
\end{lemma}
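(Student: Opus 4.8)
The plan is to prove the three equivalences by reducing every one of them to a single combinatorial event, namely that $R$ contains two fountain arcs $(0,\ell)$ and $(0,\ell+1)$ sitting in consecutive columns.

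First I would dispatch the equivalence of (2) and (3). By definition an arc is a boundary arc exactly when it has the form $(a,a+1)$. Applied to the arc $(\yy^R_n,\yy^R_{n+1})$ --- which lies in $R$ by Lemma \ref{second-type-arcs} --- this says that $(\yy^R_n,\yy^R_{n+1})$ is a boundary arc if and only if $\yy^R_{n+1}=\yy^R_n+1$, that is, $\yy^R_{n+1}-\yy^R_n=1$. Thus (2) and (3) are literally the same condition, and no further input is needed.

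Next I would treat the equivalence of (1) and (2). Writing $\ell:=\yy^R_n$, the defining rule for $\yy^R$ tells us that $\yy^R_n$ and $\yy^R_{n+1}$ are consecutive elements of the set $\{\ell : (0,\ell)\in R\}$ of fountain columns, so that $\yy^R_{n+1}-\yy^R_n=1$ holds precisely when $\yy^R_{n+1}=\ell+1$. Since no integer lies strictly between $\ell$ and $\ell+1$, this in turn is equivalent to having both $(0,\ell)\in R$ and $(0,\ell+1)\in R$. Feeding this through the definition $\xx^R_j=1\iff(0,j+1)\in R$ converts it into $\xx^R_{\ell-1}=\xx^R_{\ell}=1$, i.e. two adjacent entries of the sequence $\xx^R$ equal $1$, which is condition (1).

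The only point requiring care --- and the step I expect to be the true heart of the argument --- is the bookkeeping between the two indexing conventions: the sequence $\xx^R$ is indexed by the column offset, whereas $\yy^R$ is indexed by position in the fountain enumeration. Once the position at which (1) is read is matched with $\ell-1=\yy^R_n-1$, all three conditions describe the same occurrence of two consecutive fountain columns, and the equivalences follow directly by unwinding the definitions; I do not expect any deeper obstacle here, in keeping with the observation that the statement follows from the definitions of the sequences.
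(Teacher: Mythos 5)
Your proof is correct and matches the paper, which offers no written argument beyond the remark that the lemma ``follows from the definitions of our sequences''; unwinding the definitions as you do is exactly what is intended, and the equivalence of (2) and (3) is indeed immediate once Lemma \ref{second-type-arcs} guarantees the arc $(\yy^R_n,\yy^R_{n+1})$ lies in $R$. Your bookkeeping remark is well taken and in fact exposes a small imprecision in the statement itself: with the paper's conventions the two consecutive $1$'s sit at positions $\yy^R_n-1$ and $\yy^R_n$ of $\xx^R$, not at $n$ and $n+1$ (e.g.\ in Example \ref{running-example} one has $\yy^R_2=7$, $\yy^R_3=8$, so (2) holds for $n=2$ while $\xx^R_2=1$ and $\xx^R_3=0$), so condition (1) must be read at the shifted index exactly as you do.
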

\begin{definition}\label{definition-of-special-right-fountain}
    We say that a right fountain triangulation $R$ is \textit{special} if there exists a right fountain triangulation $R'$ such that $R \sim R'$ and $\yy^{R'}_{n+1} - \yy^{R'}_n \geq 2$ for all $n \geq 1$. In other words, there does not exist any boundary arcs in $A_2(R')$. 
\end{definition}

The following proposition is just another characterisation of special right fountain triangulations.
\begin{proposition}
    A right fountain triangulation $R$ is special if and only if it contains only finitely many boundary arcs in $A_2(R)$.
\end{proposition}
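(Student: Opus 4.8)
The plan is to reduce the entire statement to a combinatorial fact about the sequence $\xx^R$ and then invoke Theorem \ref{Thm:fountains-equiv}. First I would record the dictionary provided by the lemma preceding Definition \ref{definition-of-special-right-fountain}: the boundary arcs contained in $A_2(R)$ are in bijection with the indices $n$ at which $\xx^R_n = \xx^R_{n+1} = 1$, and the witness condition ``$\yy^{R'}_{n+1} - \yy^{R'}_n \geq 2$ for all $n \geq 1$'' appearing in Definition \ref{definition-of-special-right-fountain} says exactly that $\xx^{R'}$ has no two consecutive $1$'s. Hence the proposition becomes: $\xx^R$ has only finitely many indices $n$ with $\xx^R_n = \xx^R_{n+1} = 1$ if and only if there is a right fountain triangulation $R'$ with $R \sim R'$ whose sequence $\xx^{R'}$ is free of consecutive $1$'s. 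Since Theorem \ref{Thm:fountains-equiv} identifies $R \sim R'$ with the condition that $\xx^R$ and $\xx^{R'}$ differ in only finitely many places, the problem is now purely one about $\{0,1\}$-sequences.

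For the implication from \emph{special} to \emph{finitely many boundary arcs}, I would take such an $R'$ and set $D = \{\, n : \xx^R_n \neq \xx^{R'}_n \,\}$, which is finite. Whenever $\xx^R_n = \xx^R_{n+1} = 1$, the sequence $\xx^{R'}$ cannot carry $1$'s in both positions $n$ and $n+1$, so at least one of $n, n+1$ lies in $D$. Each element of $D$ is obtained from at most two such indices $n$ (namely $n = d$ or $n = d-1$ for $d \in D$), so there are at most $2|D|$ indices with $\xx^R_n = \xx^R_{n+1} = 1$; thus $A_2(R)$ contains only finitely many boundary arcs.

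For the converse, I would assume that $\xx^R$ has only finitely many indices $n$ with $\xx^R_n = \xx^R_{n+1} = 1$, so that they all lie in some initial segment $[1,N]$. I would then modify $\xx^R$ only on $[1,N]$—for instance replacing every second $1$ in each maximal run of consecutive $1$'s by $0$—to obtain a sequence $\xx'$ with no two consecutive $1$'s that agrees with $\xx^R$ off a finite set. Because $\yy^R$ is an infinite strictly increasing sequence, $\xx^R$ has infinitely many $1$'s, and since the changes are confined to $[1,N]$ the sequence $\xx'$ still has infinitely many $1$'s. It is therefore realized as $\xx^{R'}$ for the right fountain triangulation $R'$ whose fountain arcs are $\{(0,\ell) : \xx'_{\ell-1} = 1\} \cup \{(0,1)\}$ and which triangulates each intermediate polygon $P_n$ arbitrarily. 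By Theorem \ref{Thm:fountains-equiv} we get $R \sim R'$, and $\xx^{R'} = \xx'$ has no consecutive $1$'s, so $R$ is special.

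The only genuinely delicate point is this last construction: I must be sure that the modified sequence $\xx'$ really is the $\xx$-sequence of a bona fide right fountain triangulation, so that Theorem \ref{Thm:fountains-equiv} may be applied. This is exactly why verifying that $\xx'$ retains infinitely many $1$'s (guaranteeing that $0$ still supports a right fountain) is the crucial check; the rest is bookkeeping about where consecutive $1$'s can occur and how a finite perturbation removes them.
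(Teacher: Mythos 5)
The paper states this proposition without proof (it is presented as ``just another characterisation''), so there is no argument to compare against; your proof is correct and supplies exactly the reasoning the surrounding text intends, namely the dictionary from the preceding lemma (boundary arcs in $A_2(R)$ $\leftrightarrow$ consecutive $1$'s in $\xx^R$) combined with Theorem \ref{Thm:fountains-equiv}. Both directions are sound: the $2|D|$ bound handles the forward implication, and in the converse the key realizability check --- that the modified sequence still has infinitely many $1$'s and hence arises from an actual right fountain triangulation --- is correctly identified and verified.
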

Combining all this discussion, we have the following corollary.
\begin{corollary}\label{cor:special-right}
    The map $\xx$ that sends a special right fountain $R$ to the sequence $\xx^R$ induces a bijection
    \begin{align*}
\{ \emph{special right fountain triangulations} \}/\sim \quad&  \longleftrightarrow  \frac{\{\emph{sequences on }\{0,1\} \emph{ with no consecutive 1's}\}}{\emph{tail equivalence}} \\
 \textrm{which, in turn, gives us a bijection } &  \\
\{ \emph{special right fountain triangulations} \} / \sim \quad &  \longleftrightarrow  \frac{\{\textup{Penrose tilings}\}}{\textup{isometry}} \ .
    \end{align*}
\end{corollary}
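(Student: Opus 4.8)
The plan is to treat the two displayed bijections separately, the first being where all the work lies. The second follows formally: one composes the first with the bijection between isometry classes of Penrose tilings and tail-equivalence classes of $\{0,1\}$-sequences with no consecutive $1$'s recalled in Section~\ref{Sec:Prelim}. For the first bijection, well-definedness and injectivity are immediate from Theorem~\ref{Thm:fountains-equiv}: it states precisely that $R_1 \sim R_2$ holds if and only if $\xx^{R_1}$ and $\xx^{R_2}$ differ in only finitely many places, so the assignment $[R] \mapsto [\xx^R]$ is a well-defined injection from $\{\text{special right fountain triangulations}\}/\!\sim$ into the set of tail classes of arbitrary $\{0,1\}$-sequences. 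Thus only the image needs to be identified.

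Next I would pin down the image as exactly the tail classes admitting a representative with no consecutive $1$'s. The lemma relating consecutive values of $\xx^R$ to $A_2(R)$ shows that $\xx^R_n = \xx^R_{n+1} = 1$ occurs exactly when $(\yy^R_n, \yy^R_{n+1})$ is a boundary arc; hence $\xx^R$ has only finitely many consecutive-$1$ pairs if and only if $A_2(R)$ has only finitely many boundary arcs, which by the proposition characterising special right fountain triangulations is precisely specialness of $R$. Since a $\{0,1\}$-sequence has finitely many consecutive-$1$ pairs exactly when its tail class contains a no-consecutive-$1$'s sequence, the image is exactly the set of such tail classes. Restriction along the inclusion of no-consecutive-$1$'s sequences into all sequences identifies these with tail-equivalence classes of no-consecutive-$1$'s sequences (two such sequences are tail equivalent in one sense if and only if in the other), giving the target stated in the corollary.

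For surjectivity I would realise a prescribed sequence by an explicit triangulation. Given a no-consecutive-$1$'s sequence $s$ with infinitely many $1$'s, take as fountain arcs $(0,1)$ together with $(0,n+1)$ for each $n$ with $s_n = 1$; these are pairwise noncrossing and, being infinite in number, form a right fountain with fountain point $0$. Lemma~\ref{second-type-arcs} forces the arcs $(\yy^R_n, \yy^R_{n+1})$ between consecutive fountain vertices, and the finite polygons $P_n$ of Remark~\ref{polygon-remark} may be triangulated arbitrarily; the union is a maximal noncrossing set, i.e.\ a right fountain triangulation $R$ with $\xx^R = s$. As $s$ has no consecutive $1$'s, $A_2(R)$ contains no boundary arc, so $R$ is special. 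Hence every target tail class is attained, completing the first bijection.

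The main obstacle is the codomain bookkeeping rather than any single computation. One must check that the genuinely new hypothesis---specialness---matches \emph{exactly} the no-consecutive-$1$'s condition at the level of tail classes, via the dictionary between consecutive $1$'s in $\xx^R$ and boundary arcs in $A_2(R)$, and one must take care that the sequences arising from right fountains always carry infinitely many $1$'s, so that the surjective construction produces an honest fountain and the target classes are matched correctly. Once these points are in place, the remaining verifications---that the construction yields a maximal noncrossing family and that the Penrose correspondence can simply be composed---are routine given Lemma~\ref{second-type-arcs}, Remark~\ref{polygon-remark}, and Section~\ref{Sec:Prelim}.
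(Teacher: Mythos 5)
Your argument follows the same route the paper intends: the paper offers no written proof beyond ``combining all this discussion,'' and your assembly of Theorem~\ref{Thm:fountains-equiv} (for well-definedness and injectivity on $\sim$-classes), the lemma identifying consecutive $1$'s in $\xx^R$ with boundary arcs in $A_2(R)$, and the proposition characterising specialness is exactly that combination, made explicit. Your surjectivity construction (fountain arcs $(0,n+1)$ for $s_n=1$, the forced arcs of Lemma~\ref{second-type-arcs}, and arbitrary triangulations of the polygons of Remark~\ref{polygon-remark}) is correct and is a useful addition that the paper leaves implicit.

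The one point you flag but do not resolve is a genuine issue, and it is worth being honest that it cannot be resolved as the statement is written: since a right fountain contains infinitely many fountain arcs, every sequence $\xx^R$ has infinitely many $1$'s, so the tail class of sequences with finitely many $1$'s (in particular the zero sequence, which is a perfectly good no-consecutive-$1$'s index sequence and corresponds to a Penrose tiling) is never of the form $[\xx^R]$. Your construction accordingly only produces preimages for classes with infinitely many $1$'s, so surjectivity fails for exactly this one class. This is a defect of the corollary as stated rather than of your argument --- either the codomain should be restricted to tail classes of sequences with infinitely many $1$'s, or the domain enlarged to accommodate the exceptional class (compare the leapfrog triangulation of Section~\ref{Sec:Examples}, which the paper itself associates to the index sequence $(0,0,\ldots)$ but which is not a fountain triangulation). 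Apart from making this caveat explicit rather than deferring it as ``bookkeeping,'' your proof is complete and matches the intended one.
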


    \subsection{Proof of Theorem \ref{intro-theorem-penrose}}
    
    We know from Section \ref{Sub:comb-model}, that cluster tilting subcategories of the Frobenius category $\mc{C}_2$ are in bijection with fountain triangulations of the completed $\infty$-gon. On the other hand, we have established in Section \ref{Sub:Penrose-triang} a relation between right fountain triangulations and 01-sequences. Hence, now we have all the ingredients to understand the relation between Penrose tilings and cluster tilting subcategories of $\mc{C}_2$.
    
    A fountain triangulation consists of a left fountain and a right fountain at a point $a \in \ZZ$. We might assume, up to shifts, that the fountain point is $a=0$. As all the results of Section \ref{Sub:Penrose-triang} hold true also for left fountains, given a cluster tilting subcategory $\S$ of $\mc{C}_2$ (or an appropriate shift so that the fountain point is 0), we obtain a pair of sequences $(\xx^{\S, <0}, \xx^{\S, >0})$ where the first component is given by the left fountain associated to $\S$ and the second by the corresponding right fountain. It is clear that the assignment 
    \begin{align*}
    \S \mapsto (\xx^{\S, <0}, \xx^{\S, >0})
    \end{align*}
    is surjective. This proves the first part of Theorem \ref{intro-theorem-penrose} from the Introduction. The second part of Theorem \ref{intro-theorem-penrose} follows from Theorem \ref{Thm:fountains-equiv} and the third by Corollary \ref{cor:special-right} combined with the discussion below.

    We conclude the proof by translating the language of special right/left fountains to the categorical setting. Recall that $R$ is said to be a special right fountain triangulation if it is equivalent to a right fountain triangulation $R'$ so that $A_2(R')$ does not contain a boundary arc. distinguished  classes $A_1(R)$ and $A_2(R)$ of arcs in $R$ correspond to distinguished full subcategories of a cluster tilting subcategory $\S$. Moreover, the boundary arcs correspond to projective objects in $\mc{C}_2$. Hence, with all the identifications we have made so far, for a given cluster tilting subcategory $\S$ up to shift, it is special if it is equivalent to a cluster tilting subcategory $\S'$ such that the corresponding special subcategory $A_2(\S')$ is \textit{stable} in the sense that it does not contain any projective objects. \hfill \qed

\subsection{Friezes and Penrose Tilings} \label{Sec:FriezesPenrose}

Using the bijection from Section \ref{Sub:Penrose-triang}, we relate Penrose tilings to certain arrays, which we call \emph{half-friezes}: following Conway and Coxeter, we want to associate a quiddity sequence to a fountain triangulation of the $\infty$-gon and construct a frieze from that sequence. For simplicity assume that $0$ is the fountain point. Here we quickly run into problems.  There are infinitely many arcs starting at $0$, which would give us as quiddity $a_0=\infty$. Consequently, any arc crossing $0$, that is, any arc of the form $(-i,j)$, with $i,j >0$, would have a non-integer value. However, all other elements of the quiddity sequence and the remaining arcs are well-defined integers. Therefore we make the following

\begin{definition} \label{Def:infinite-half-frieze} 
    An \emph{infinite right half-frieze} at $r \in \ZZ$ is a set of elements $m_{a,b}$ in a commutative ring $R$ with unit $1$,  with $a,b \in \ZZ$ such that $r \leq a \leq b$, 
    which satisfies the following rules:
    \begin{enumerate}[(1)]
        \item $m_{a,a} = 0$ for any $a \geq r$. 
        \item $m_{a,a+1} = 1$ for any $a \geq r$. 
        \item $m_{a,b} \neq 0$ for any pair $(a,b)$ with $r \leq a < b$.
        \item $m_{a,b} \; m_{a+1, b+1} - m_{a+1, b} \;m_{a, b+1} = 1$ for any pair $a,b$ with $r \leq a < b$. 
    \end{enumerate}
    Similarly, one can define an \emph{infinite left half-frieze} as the set of non-zero elements $m_{a,b}$ with constant $l \in \ZZ$, where the following hold:
    \begin{enumerate}[(1')]
            \item $m_{a,a} = 0$ for any $a \leq l$. 
                    \item $m_{a-1,a} = 1$ for any $a \leq l$. 
        \item $m_{a,b} \neq 0$ for any pair $(a,b)$ with $a < b \leq l$.
        \item $m_{a-1,b-1} \; m_{a, b} - m_{a-1, b} \;m_{a, b-1} = 1$ for any pair $a,b$ with $a < b \leq l$. 
            \end{enumerate}
We define an \emph{infinite half-frieze} or \emph{fountain frieze} at $r=l$ to be the union of the left half frieze at $r$ and the right half frieze at $r$.

 In the following we will consider the entries $m_{a,b} \in \ZZ_{>0}$ with $b >a$, that is, we will only look at \emph{integral} (right/left) half-friezes.
    When it is clear from the context, we will just use the term frieze without the adjectives.
The sequence $m_{a,a+2}$ with $a \neq r-1$ ($a \geq r$ resp.~$a+2 \leq l$) is called the \textit{quiddity sequence} of the (right/left) half-frieze. 
\end{definition}
It is clear from our rules (and it is easy to see when visualised on an array where $m_{a,b}$ lives in the $a$th row and $b$th column) that a quiddity sequence determines the frieze. We refer to \cite{Baur-Parsons-Tschabold} for more on infinite friezes and their quiddity sequences. 

\begin{definition} \label{def:frieze-from-fountain}
Given a right fountain triangulation $R$ of the completed $\infty$-gon (with fountain point $f \in \ZZ$), we can define a \emph{right half-frieze from the fountain triangulation $R$} by  the quiddity sequence 
\begin{align}\label{our-quiddity-rule}
m_{a,a+2} = \text{the number of triangles in } R \text{ that the vertex } a+1 \text{ is adjacent to}.
\end{align}
\end{definition}
\begin{example}\label{frieze-triangulation-example}
    Let us see this on our running example $R$.

    \begin{minipage}{.5\textwidth}
        \begin{tikzpicture}
    
            \draw[thick] (0,0) -- (9,0);
            
            \foreach \x in {0, 1, 2, 3, 4, 5, 6, 7, 8} {
                \node at (\x, -0.3) {\x};
            }
        
            \draw[thick] (0,0) to[bend left=60] (3,0);
            \draw[thick] (0,0) to[bend left=60] (7,0);
            \draw[thick] (0,0) to[bend left=60] (8,0);
            \draw[thick] (1,0) to[bend left=60] (3,0);
            \draw[thick] (3,0) to[bend left=60] (5,0);
            \draw[thick] (3,0) to[bend left=60] (7,0);
            \draw[thick] (5,0) to[bend left=60] (7,0);
        
        \end{tikzpicture}        
    \end{minipage}%
    \begin{minipage}{.5\textwidth}
        \begin{align*}
        \begin{array}{c|cccccccccc}
            &0&1&2&3&4&5&6&7&8&\ldots \\\hline
            0&0 & 1 & 2 & 1 & 3 & 2&3& 1 & 1 & \ldots \\
            1& & 0 & 1& 1 & 4 & 3 & 5& 2& 3& \ldots \\
            2&&& 0&1& 5 &4 &7&3&5& \ldots \\
            3&&&&0&1& 1&2&1&2&\ldots \\
            4&&&&&0& 1&3&2&5& \ldots \\
            5&&&&&&0&1&1&3&\ldots\\
            6&&&&&&&0&1&4&\ldots \\
            7&&&&&&&&0 & 1&\ldots
        \end{array}
    \end{align*}
    \end{minipage}

    Note that for each $\yy_n^R$, if we stop the array at the column $i$, we get a finite frieze thanks to the finite polygon with vertices $0, 1, \ldots, \yy^R_n$. For instance, since our $\yy_1^R = 3$, we have the finite frieze

    \begin{minipage}{.2\textwidth}
        \begin{align*}
        \begin{array}{c|ccccc}
            &0&1&2&3 \\\hline
            0&0 & 1 & 2 & 1\\
            1& & 0 & 1& 1 \\
            2&&& 0&1\\
            3&&&&0&
        \end{array}
    \end{align*}
    \end{minipage}
    \begin{minipage}{.8\textwidth}
        \begin{align*}
            \begin{matrix}
                \ldots &{\color{red}0}&&{\color{red}0}&&{\color{red}0}&&{\color{red}0}&&0 \; \ldots\\
                \ldots \; 1&&{\color{red}1}&&{\color{red}1}&&{\color{red}1}&&1&\ldots\\
                \ldots &1&&{\color{red}{2}}&&{\color{red}{1}}&&2&&1 \; \ldots\\
                \ldots \; 1&&1&&{\color{red}1}&&1&&1&\ldots\\
                \ldots &0&&0&&0&&0&&0 \; \ldots
            \end{matrix}
        \end{align*}
    \end{minipage}
\end{example}
As for every positive integer $n$, we get a finite frieze from the triangulation of the polygon with vertices $0, \ldots, \yy_n^R$, we have the following lemma by the theory of finite friezes.
\begin{lemma}
    Given a right fountain triangulation $R$ with fountain point 0 and consider the frieze given by the rule \eqref{our-quiddity-rule}. Then, $m_{a,b} = 1$ if and only if $(a,b)$ is an arc in $R$.
\end{lemma}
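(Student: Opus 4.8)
The plan is to reduce this infinite statement to the finite Conway--Coxeter correspondence (Theorem \ref{Thm:CoCo}) by cutting the half-frieze off along a fountain arc. Fix a pair $(a,b)$ with $0 \le a < b$ and choose $n$ so large that $\yy^R_n \ge b$. By definition the fountain arc $(0,\yy^R_n)$ lies in $R$, and together with the boundary arcs $(0,1),(1,2),\dots,(\yy^R_n-1,\yy^R_n)$ it bounds a polygon $P$ on the vertices $0,1,\dots,\yy^R_n$. Since $R$ is noncrossing, no arc of $R$ crosses $(0,\yy^R_n)$, so the arcs of $R$ lying in this region form a genuine triangulation of $P$ (cf.~Lemma \ref{second-type-arcs} and Remark \ref{polygon-remark}). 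Let $F$ denote the finite Conway--Coxeter frieze attached to this triangulated polygon, whose quiddity is given by \eqref{our-quiddity-rule}.

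The key step is to show that $F$ agrees with the half-frieze on the triangular region $\{(a,b) : 0 \le a < b \le \yy^R_n\}$. Both arrays satisfy the boundary conditions $m_{a,a}=0$, $m_{a,a+1}=1$ and the frieze rule, so along each row the entries obey the three-term recurrence $m_{a,b+1} = c_b\,m_{a,b} - m_{a,b-1}$, where $c_b$ denotes the quiddity at vertex $b$; consequently $m_{a,b}$ is a continuant in the quiddities $c_{a+1},\dots,c_{b-1}$ attached to the intermediate vertices $a+1,\dots,b-1$ alone. For $0 \le a < b \le \yy^R_n$ these are all interior vertices of $P$, and for an interior vertex $v$ every triangle of $R$ incident to $v$ lies inside $P$ (again because $(0,\yy^R_n)$ is uncrossed), so the triangle-incidence count of $v$ computed in $R$ coincides with the one computed in $P$. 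Hence the two arrays share the same intervening quiddities, and therefore the same entries throughout this region; note that the corner quiddities at $0$ and at $\yy^R_n$, where the counts genuinely differ, never enter these continuants.

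It then remains to invoke the finite theory: by Conway--Coxeter (Theorem \ref{Thm:CoCo}) together with the standard fact that an entry of a finite frieze equals $1$ precisely when the corresponding pair is an arc (boundary edge or diagonal) of the triangulation, the $(a,b)$-entry of $F$ equals $1$ if and only if $(a,b)$ is an arc of $P$, which for $0 \le a < b \le \yy^R_n$ is the same as $(a,b)\in R$. Combined with the previous paragraph, this gives $m_{a,b}=1$ if and only if $(a,b)\in R$, as claimed, and since $n$ could be taken arbitrarily large every pair $(a,b)$ is covered. The main obstacle is the middle step: one must check carefully that the infinitely many arcs of $R$ emanating from the fountain point do not disturb the interior quiddities, which is exactly what the uncrossedness of the fountain arc $(0,\yy^R_n)$ guarantees; once that is in place the conclusion is immediate from the finite case.
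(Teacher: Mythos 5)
Your proof is correct and follows the same route the paper takes: the paper deduces this lemma, essentially without further argument, from the observation that for every $n$ the triangulation restricted to the polygon on vertices $0,1,\dots,\yy^R_n$ (cut off along the uncrossed fountain arc $(0,\yy^R_n)$) yields a finite Conway--Coxeter frieze, which is exactly your reduction. Your continuant argument showing that the corner quiddities at $0$ and $\yy^R_n$ --- the only vertices whose triangle counts in $R$ and in the finite polygon differ --- never enter the entries $m_{a,b}$ with $0\le a<b\le \yy^R_n$ is a worthwhile detail that the paper leaves implicit.
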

And an immediate corollary is the following Proposition.
\begin{proposition}
    Given a right fountain triangulation $R$ with fountain point 0 and consider the frieze given by the rule \eqref{our-quiddity-rule}. Put
    \begin{align*}
        x_n^R = \begin{cases}
            1 &\quad \text{if } m_{0,n+1} = 1 \\
            0 & \quad \text{else}.
        \end{cases}
    \end{align*}
    Then, we have $x^R = \xx^R$. If $R$ is a special right fountain triangulation, then the first row of the half-frieze written in the form \eqref{Eq:array-frieze} gives us the sequence $\xx^R$ and hence determines a Penrose tiling.
\end{proposition}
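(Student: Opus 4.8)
The plan is to derive both assertions directly from the preceding Lemma (that $m_{a,b}=1$ if and only if $(a,b)$ is an arc of $R$) together with Corollary \ref{cor:special-right}, so that no fresh computation with the frieze rule is required.

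First I would establish $x^R = \xx^R$ by comparing the two definitions term by term. Fixing an index $n$, the defining rule for $x^R$ gives $x^R_n = 1$ exactly when $m_{0,n+1} = 1$; by the preceding Lemma this happens precisely when $(0,n+1)$ is an arc of $R$; and by the defining rule for $\xx^R$ this is in turn equivalent to $\xx^R_n = 1$. Since both sequences are $\{0,1\}$-valued, chaining these equivalences yields $x^R_n = \xx^R_n$ for every $n$, hence $x^R = \xx^R$.

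Next I would read off the first row. Writing the half-frieze in the array form \eqref{Eq:array-frieze}, the $0$th row lists the entries $m_{0,b}$ for $b \geq 0$, beginning with $m_{0,0}=0$ and $m_{0,1}=1$ and continuing $m_{0,2}, m_{0,3}, \dots$. By the first part, the columns $b = n+1 \geq 2$ in which this row equals $1$ are exactly the indices with $\xx^R_n = 1$, so the first row encodes $\xx^R$. Finally, since $R$ is special, Corollary \ref{cor:special-right} assigns to the $\sim$-class of $R$ --- equivalently, to the tail-equivalence class of $\xx^R$ --- a unique Penrose tiling up to isometry; thus the first row of the half-frieze determines a Penrose tiling.

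I do not expect a genuine obstacle, as both parts are bookkeeping once the preceding Lemma is available. The one point needing care is the index shift $b = n+1$ between the column label in \eqref{Eq:array-frieze} and the subscript of $\xx^R$, together with the convention that the sequence begins by testing $(0,2)$; I would double-check this alignment against Example \ref{x-y-sequence-example}. A second minor point is to record that specialness of $R$ is precisely what guarantees that $\xx^R$, up to tail equivalence, has no consecutive $1$'s, which is what lets the Penrose correspondence of Corollary \ref{cor:special-right} apply.
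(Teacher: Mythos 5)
Your proposal is correct and follows exactly the route the paper intends: the paper states this proposition as an immediate corollary of the preceding lemma (that $m_{a,b}=1$ iff $(a,b)\in R$), gives no further proof, and the chain of equivalences you write out, together with the appeal to Corollary \ref{cor:special-right} for the Penrose tiling, is precisely that argument. Your attention to the index shift $b=n+1$ and to why specialness is needed for the tail-equivalence class to have no consecutive $1$'s is accurate bookkeeping, not a deviation.
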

It is clear that in general we cannot reconstruct the first row of the frieze from the sequence $\xx^R$. This aligns with our discussion in Section \ref{Sub:Penrose-triang}. However, in Section \ref{Sub:Fountain-frieze} there is an example of a  frieze that can be uniquely reconstructed from $\xx^R$.

\section{From cluster characters to friezes} \label{Sec:CC->Frieze}

Next we give a representation theoretic explanation of half-friezes by considering a cluster character on the Frobenius category $\mc{C}_2=\CM_{\ZZ}(R)$ introduced in Section \ref{Sub:MCM-Frobenius-cat}. Since most arguments work similarly for the Paquette--Y{\i}ld{\i}r{\i}m category $\mc{C}$ (recalled below), we simultaneously treat the two cases: the main difference is that in the triangulated category $\mc{C}$ there are no non-zero projective injective objects  (corresponding boundary edges in the $\infty$-gon). \\
We first recall how to get from $\CCC_2$ (resp.~$\CCC$) to friezes via the cluster character. Therefore we will describe the Auslander--Reiten quiver for these categories (see Prop.~\ref{Prop:AR-quiver-Frobenius}). 

First recall the Paquette--Y{\i}ld{\i}r{\i}m category $\mc{C}$ \cite{Paquette-Yildirim}, the completed version of the Holm--J{\o}rgensen category $\mc{D}$ from \cite{Holm-Jorgensen}, which is equivalent to the stable version of our Frobenius category $\mc{C}_2$, for details see \cite{ACFGS}:
$$\CCC=\overline{\mc{D}} \simeq \underline{\mc{C}}_{2}=\underline{\CM}_\ZZ(\CC[x,y]/(x^2)) \ . $$ 
We will make use of the combinatorial model for both of these categories: the arcs $\gamma$ in the completed $\infty$-gon. Note that the difference in these two combinatorial models is that for $\mc{C}_2$ we also consider the boundary of the $\infty$-gon. In order to compute the cluster character for an object $M_\gamma \in \mc{C}$ or $\mc{C}_2$ for a cluster tilting subcategory $\mc{T}$ of $\mc{C}$ or $\S$ of $\mc{C}_2$, we first have get from the arc $\gamma$ to the object $M_\gamma$ in $\mc{C}$ or $\mc{C}_2$. This goes as follows: the triangulation $\mc{T}$ consists of some arcs $\{\alpha_i\}_{i \in I}$ in the $\infty$-gon. By a direct computation in the Frobenius case $\mc{C}_2$ \cite[Appendix A.2]{ACFGS} and by \cite[Theorem 4.4 with one accumulation point]{Paquette-Yildirim} in the triangulated case, $\mc{T}$ (resp. $\S$) corresponds to a fountain triangulation of the (completed) $\infty$-gon (see Fig. \ref{Fig:fountain}). \\
 We explain the correspondence in the triangulated case: to $\mc{T}$ one associates a quiver $Q_{\mc{T}}$ in the same way as in \cite{SchifflerBook} (Note: we label vertices on the $\infty$-gon counterclockwise, so we have to be careful how to orient the quiver - we choose to orient  $Q_\mc{T}$ in the same way as in  \cite{SchifflerBook}). Note that $Q_{\mc{T}}$ will have infinitely many vertices, which we may label by positive integers, i.e., we assume that $I=\N_{>0}$. Then the projective $P(i)$ at $i$ is given as  $\tau^{-1}(M_{\alpha_i})$ (and hence the shifts $P(i)[1]$ of the projectives correspond to the objects $M_{\alpha_i}$. So $M_\gamma$ is a module over $kQ_{\mc{T}}$ if and only if $\gamma \not \in \{\alpha_i \}$). Note that in terms of arcs, $\tau^{-1}$ moves the arc one space counterclockwise: if $\gamma=(a,b)$ with $a < b$, then $\tau^{-1}(\gamma)=(a+1,b+1)$. 
  If there is no danger of confusion we will identify the arc $(a,b)$ with the object $M_{(a,b)} \in \mc{C}$. \\

Further, for any arc $\gamma$, the corresponding module $M_\gamma$ is in terms of the quiver $Q_{\mc{T}}$: first compute all the $\alpha_{i}$ that the arc $\gamma$ crosses. Get a representation of $Q_{\mc{T}}$ by putting a vector space $k$ at vertex $i$ (corresponding to $\alpha_i$) and $0$ if $\gamma$ does not cross $\alpha_i$. The maps in the representation are just identities or $0$. If $\gamma \not \in \{\alpha_i\}$, then this yields a non-zero representation of $Q_{\mc{T}}$, and hence a non-zero module over $kQ_{\mc{T}}$. 

\subsection{Extensions and the Auslander--Reiten quiver of $\mc{C}_2$ and $\mc{C}$}

The AR-sequences in $\mc{C}_2$ (resp.~AR-triangles in $\mc{C}$) can then either be computed using knitting or using arcs in the $\infty$-gon. We refer to \cite{ASS, Auslander, Yoshino} for generalities on AR-sequences. 
First recall the extensions with indecomposable end terms in $\mc{C}_2$. Here the dimension of $\Ext(-,-)$ is at most one-dimensional over $\CC$. The only nonsplit short exact sequences (up to scalars) with indecomposable end terms are listed below, cf. \cite[Lemmas 4.5, 4.6, 4.7]{ACFGS}.
\begin{enumerate}
    \item If $a < c < b < d$, then we have non-split short exact sequences
    \begin{align*}
        0 \to (a,b) \to (a,d) \oplus (c,b) \to (c,d) \to 0 \\
        0 \to (c,d) \to (a,c) \oplus (b,d) \to (a,b) \to 0 \ .
    \end{align*}
    \item If $a < b < c$, then we have non-split short exact sequences
    \begin{align*}
        0 \to (b, \infty) \to (a,b) \oplus (c, \infty) \to (a,c) \to 0\\
        0 \to (a,c) \to (a, \infty) \oplus (b,c) \to (b, \infty) \to 0 \ .
    \end{align*}
    \item If $a < b$, then we have a non-split short exact sequence
    \begin{align*}
        0 \to (b, \infty) \to (a,b) \to (a, \infty) \to 0 \ .
    \end{align*}
\end{enumerate}

\begin{lemma} \label{Lem:finiteARseq}
For any finite arc $(a,b) \in \mc{C}_2$ which is not a boundary arc (\textit{i.e.} $b-a \geq 2$) the short exact sequence
    \begin{align} \label{AR-sequence-Frobenius}
    0 \to (a-1,b-1) \to (a-1,b) \oplus (a,b-1) \to (a,b) \to 0  
    \end{align}
is an almost split sequence (i.e., the AR-sequence) ending at $(a,b)$. Note that the AR-translate of $(a,b)$ is $\tau((a,b))=(a-1,b-1)$. \\
The corresponding exchange sequence is
\begin{equation}  \label{eq:exchangeseq} 0 \to (a,b) \to (a-1,a) \oplus (b-1,b) \to (a-1,b-1) \to 0 
\end{equation}
Similarly, the short exact sequence
    \begin{align} 
    0 \to (a,b) \to (a+1,b) \oplus (a,b+1) \to (a+1,b+1) \to 0  
    \end{align}
is an almost split sequence (i.e., the AR-sequence) starting at $(a,b)$.
 \\
For an infinite arc $(a,\infty)$ there does not exist an almost split sequence starting or ending with it.
\end{lemma}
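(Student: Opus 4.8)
The plan is to treat the three finite-arc short exact sequences uniformly and then argue the non-existence in the infinite case separately, so let me first record the reduction. Under the dictionary of Section~\ref{Sub:comb-model}, a non-boundary finite arc $(a,b)$ (i.e.\ $b-a\geq 2$) corresponds to the indecomposable \emph{non-projective} module $(x,y^{b-a-1})(1-b)$, while the boundary arcs $(a,a+1)$ are exactly the projective-injective objects $S(-a)$. Each of the three displayed sequences is, after relabelling its four endpoints as $a-1<a<b-1<b$ (respectively $a<a+1<b<b+1$), precisely an instance of the type~(1) non-split short exact sequences recalled just before the lemma; the inequality $b-a\geq 2$ is exactly what forces these four points to be distinct, which is why the sequences exist and are genuinely non-split. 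In particular this already pins down the middle terms as stated, and shows that the exchange sequence \eqref{eq:exchangeseq} is simply the companion type~(1) sequence sharing the same outer terms $(a,b)$ and $(a-1,b-1)$ but with projective middle $(a-1,a)\oplus(b-1,b)$.

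Next I would upgrade ``non-split short exact'' to ``almost split''. The inputs are: (i) all end terms are indecomposable with local endomorphism ring, since $\mc{C}_2$ is Krull--Remak--Schmidt; (ii) the two outer terms of \eqref{AR-sequence-Frobenius} are a crossing pair, as $a-1<a<b-1<b$ means $(a-1,b-1)$ and $(a,b)$ cross, so by Section~\ref{Sub:comb-model} the space $\Ext^1\big((a,b),(a-1,b-1)\big)$ is one-dimensional over $\CC$; and (iii) the Auslander--Reiten translate is $\tau(a,b)=(a-1,b-1)$. For (iii) I would invoke that the stable category $\underline{\mc{C}}_2\simeq\mc{C}$ has $\tau^{-1}(a,b)=(a+1,b+1)$ (the counterclockwise rotation recalled above), together with the standard fact that for a non-projective object the Frobenius AR-sequence projects to the stable AR-triangle with the same outer terms, hence the same translate. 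Granting that an AR-sequence ending at $(a,b)$ \emph{exists}, any non-split element of the one-dimensional space $\Ext^1\big((a,b),\tau(a,b)\big)$ must represent it; since \eqref{AR-sequence-Frobenius} is such a non-split element, it is the AR-sequence, and the sequence starting at $(a,b)$ follows by the identical argument applied to $\tau^{-1}$.

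The existence used in the previous step, and the whole of the last assertion, rest on the behaviour of $S$ at its singular locus, which is where the real work lies. Here the decisive observation is that $S$ has a \emph{non-isolated} singularity along $\{x=0\}$, so AR-sequences need not exist, and one must test freeness on the punctured spectrum. The only non-maximal graded prime is $(x)$: a finite-arc ideal $(x,y^{b-a-1})$ becomes the unit ideal after inverting $y$, hence is free at $(x)$, so it is free on the punctured spectrum and Auslander's existence theorem \cite{Auslander} (see also \cite{Yoshino}) provides the AR-sequence used above. By contrast the infinite arc $(a,\infty)\leftrightarrow\CC[y](-a)\cong (S/(x))(-a)$ localises at $(x)$ to $(S/(x))_{(x)}=\CC(y)$, which is \emph{not} free over $S_{(x)}=\CC(y)[x]/(x^2)$; thus $\CC[y](-a)$ fails to be free on the punctured spectrum.

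I would then conclude part~(4) by applying the criterion in the other direction: an indecomposable non-projective MCM module admits an AR-sequence ending at it only if it is free on the punctured spectrum, and, via Gorenstein duality on $S$, the same condition governs the existence of an AR-sequence starting at it. As $\CC[y](-a)$ violates this condition, no almost split sequence can start or end at $(a,\infty)$. I expect this final non-existence step to be the main obstacle, precisely because it is the one point where the non-isolated nature of the $A_\infty$-singularity genuinely intervenes and where a purely combinatorial flip argument is unavailable; by comparison the finite-arc assertions reduce cleanly to the listed short exact sequences together with the one-dimensionality of the relevant $\Ext^1$.
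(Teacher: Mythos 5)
Your argument is correct in substance but follows a genuinely different route from the paper. The paper's proof is a direct verification: it takes the list of non-split extensions recalled before the lemma and checks minimality (the right/left almost split lifting property) by explicit computation with the morphisms from \cite{ACFGS}, and likewise handles the infinite arcs by direct computation. You instead factor the problem through three external inputs: (i) the identification of each displayed sequence as a type~(1) extension together with the one-dimensionality of $\Ext^1$ between crossing arcs, so that once $\tau(a,b)=(a-1,b-1)$ and existence are known, \emph{any} non-split representative is the AR-sequence; (ii) the known AR-structure of the stable category $\mc{C}\simeq\underline{\mc{C}}_2$ (from Holm--J{\o}rgensen/Paquette--Y{\i}ld{\i}r{\i}m) plus Happel's fact that Frobenius AR-sequences project to stable AR-triangles, to pin down $\tau$; and (iii) Auslander's criterion (AR-sequence ending at an indecomposable non-projective MCM module exists iff the module is free on the punctured spectrum) for both the existence in the finite-arc case and, more importantly, the non-existence for $(a,\infty)$, where your localization computation $(S/(x))_{(x)}=\CC(y)$ over $S_{(x)}=\CC(y)[x]/(x^2)$ is exactly the right witness that the non-isolated singularity obstructs AR-sequences. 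Your approach buys a conceptual explanation of \emph{why} the infinite arcs have no AR-translate, which the paper leaves implicit; the paper's approach buys self-containedness.

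One point deserves more care before your proof is fully watertight: Auslander's existence theorem is stated in the literature for complete (or henselian) local CM rings, or for standardly graded rings over a field, whereas here $S=\CC[x,y]/(x^2)$ carries the grading $\deg x=1$, $\deg y=-1$, so that $S_0=\CC\oplus\CC xy\neq\CC$ and the ``punctured spectrum'' must be interpreted with respect to the graded prime $(x,y)$. You should either cite a version of the theorem valid for $\CM_\ZZ(S)$ in this generality (a Hom-finite Krull--Schmidt exact category, where the functorial approach to almost split sequences applies), or note that for the finite-arc assertions the existence input can be bypassed entirely by checking the lifting property directly on the sequences you have already exhibited, reserving the punctured-spectrum argument for the non-existence statement alone, where the ``only if'' direction of the criterion is what is really needed.
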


\begin{proof}
The claims for finite arcs can be readily verified by looking at the description of extensions above and computing the minimal ones. The statement for infinite arcs can be also be verified by direct computation by using morphisms from \cite{ACFGS} and extensions listed above.
\end{proof}

\begin{lemma}  \label{Lem:infARseq}
Let $(a, \infty)$, $(b, \infty)$ be infinite arcs in $\CCC_2$. A homomorphism $(a,\infty) \xrightarrow{} (b, \infty)$ factors if and only if $a \leq b$ and $ a \neq b-1$. In other words: the only irreducible maps between infinite arcs are $(a-1, \infty) \xrightarrow{} (a, \infty)$. \\
Moreover, any homomorphism from a finite arc $(c,d)$ to $(a, \infty)$ factors, and any homomorphism from $(a,\infty)$ to a finite arc $(c,d)$ factors. In other words: there are no irreducible maps between infinite arcs and finite arcs.
\end{lemma}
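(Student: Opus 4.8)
The plan is to compute everything from the explicit modules $M_{(a,\infty)}=\CC[y](-a)$ together with the third family of non-split sequences $0\to(b,\infty)\to(a,b)\to(a,\infty)\to 0$ listed above. First I would record the Hom computation between infinite arcs: since $\CC[y](-a)$ is cyclic with generator in degree $a$ and annihilated by $x$, a graded homomorphism to $\CC[y](-b)$ is determined by the image of the generator, which must be a degree-$a$ element, hence a scalar multiple of $y^{b-a}$. Thus $\Hom((a,\infty),(b,\infty))$ is one-dimensional, spanned by multiplication by $y^{b-a}$, when $a\le b$, and is zero otherwise. This immediately disposes of the ``long'' maps: for $b-a\ge 2$ we have $y^{b-a}=y^{b-a-1}\cdot y$, so $(a,\infty)\to(b,\infty)$ is a composite of two non-isomorphisms between indecomposables and therefore factors.

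The heart of the argument is the ``moreover'' part, and the key structural input is that the sub- and quotient-terms of $0\to(b,\infty)\to(a,b)\to(a,\infty)\to 0$ are precisely the $x$-annihilator and the ($x$-)torsion-free quotient of the finite arc. Writing $(c,d)=\langle u,v\mid xu=y^{\,d-c-1}v,\ xv=0\rangle$ with $\deg u=c$, $\deg v=d$, one checks that $(c,d)$ is $\CC[y]$-free on $u,v$, that $\ann_{(c,d)}(x)=\CC[y]\,v\cong(d,\infty)$ is the sub-object, and that $(c,\infty)$ is the quotient. Now any $g\colon(a,\infty)\to(c,d)$ has image a quotient of $\CC[y]$, hence killed by $x$, so it lands in $\ann_{(c,d)}(x)=(d,\infty)$ and factors through the inclusion $\iota\colon(d,\infty)\hookrightarrow(c,d)$. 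Dually, any $f\colon(c,d)\to(a,\infty)$ satisfies $f(v)=0$ (since $y^{\,d-c-1}f(v)=f(xu)=xf(u)=0$ and the target is torsion-free), so $f$ kills $\ker\pi$ and factors through $\pi\colon(c,d)\twoheadrightarrow(c,\infty)$. To turn these into genuine reducibility I would exhibit the uniform non-split monomorphism $\alpha\colon(c,d)\hookrightarrow(c,d+1)$, $u\mapsto u'$, $v\mapsto yv'$, through which $f$ factors (one verifies $\pi=\pi'\circ\alpha$, whence $f=(\mu\pi')\circ\alpha$); since $\alpha$ is a non-split mono and the remaining factor cannot be a split epimorphism onto the indecomposable infinite arc $(a,\infty)$, the map $f$ is not irreducible. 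The infinite$\to$finite case then follows from the Gorenstein duality $\Hom_S(-,S)$ on $\CCC_2$, which sends finite arcs to finite arcs, infinite arcs to infinite arcs, reverses arrows and preserves irreducibility. (Alternatively, for non-boundary targets one reads this off the AR-sequence of Lemma~\ref{Lem:finiteARseq}, whose middle terms involve only finite arcs, handling the projective boundary arcs $S(-c)$ by their radical inclusion.)

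Finally I would show $y\colon(a,\infty)\to(a+1,\infty)$ is irreducible by proving it is not a sum of composites of non-isomorphisms through indecomposables. Through an infinite $Z=(a',\infty)$ the two factors would be $y^{a'-a}$ and $y^{a+1-a'}$, forcing $a<a'<a+1$, impossible. Through a finite $Z=(c,d)$, the first factor lands in $(d,\infty)=\ker\pi_Z$ and the second factors through $\pi_Z\colon Z\twoheadrightarrow(c,\infty)$; but the composite $(d,\infty)\hookrightarrow Z\twoheadrightarrow(c,\infty)$ is zero by exactness of $0\to(d,\infty)\to(c,d)\to(c,\infty)\to 0$, so every such term vanishes and the total cannot equal $y$. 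Hence $y$ is irreducible, and combined with the first paragraph the only irreducible maps among infinite arcs are the $(a-1,\infty)\to(a,\infty)$.

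I expect the main obstacle to be exactly the ``adjacent'' edge cases of the moreover part, where the natural factorization acquires an isomorphism factor (maps like $(a,d)\to(a,\infty)$, or maps touching a projective boundary arc), for which the mere existence of the factorizations through $\iota$ and $\pi$ is not enough. The uniform factorization through $\alpha$ together with Gorenstein duality is what lets me bypass a separate and more delicate analysis of irreducible maps into and out of the projective–injective objects $S(-c)$.
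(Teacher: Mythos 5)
Your proposal is correct and follows essentially the same route as the paper, whose proof simply asserts that the lemma is ``verified by direct computation using the description of the morphisms from [ACFGS, Lemmas A.3, A.4]''; you have carried out that direct computation in full, and your Hom-space calculation, the $x$-annihilator filtration $0\to(d,\infty)\to(c,d)\to(c,\infty)\to0$, and the vanishing of composites through finite arcs all check out. The one genuine addition is your uniform non-split mono $\alpha\colon (c,d)\hookrightarrow(c,d+1)$ together with Gorenstein duality to dispose of the adjacent edge cases (e.g.\ the inclusion $(d,\infty)\hookrightarrow(c,d)$ and maps involving the projective boundary arcs), which is precisely the point the paper's one-line proof leaves implicit.
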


\begin{proof}
This is again verified by direct computation using the description of the morphisms from \cite[Lemma A.3, A.4]{ACFGS}: in particular, the irreducible maps $(a-1, \infty) \xrightarrow{} (a,\infty)$ are given by $ 1 \mapsto \lambda y$ for some $\lambda \in \CC$, and any non-zero map $(a, \infty) \xrightarrow{} (a, \infty)$ is an isomorphism.
\end{proof}

\begin{proposition} \label{Prop:AR-quiver-Frobenius}
The Auslander--Reiten quiver of $\CCC_2$ looks like

\[
{\tiny 
\begin{tikzcd}[row sep=0.4em, column sep=-2em]
 &   && (a,a+1) \ar[rd,"y"]  && (a+1,a+2)  \ar[rd,"y"] && (a+2,a+3)  \ar[rd]&& \cdots && \cdots && (b-2,b-1)  \ar[rd] && (b-1,b) \ar[rd] && (b,b+1) \\ 
&& \cdots &&(a,a+2) \ar[rd, "y"] \ar[ll, dashed] \arrow[ru,hook]&& (a+1,a+3) \arrow[ru, hook] \ar[rd] \ar[ll,dashed] && \ar[ll, dashed] \cdots && \cdots && \cdots && (b-2,b) \arrow[ru,hook] \ar[rd]&& \ar[ll, dashed] (b-1,b+1) \ar[ur,hook] \ar[rd] &&    \\  
 & && \cdots && (a,a+3)\arrow[ru, hook] \ar[rd]&& \cdots && \cdots  && \cdots &&  \cdots && \cdots &&  \cdots  \\ 
&& && && \cdots && \cdots && (a+2,b-1) \ar[rd, "y"]  \arrow[ru,hook] && \cdots && \cdots && &&   \\ 
&  &&  &&  && \cdots &&  (a+1,b-1) \ar[rd, "y"]  \arrow[ru, hook] && (a+2,b) \ar[rd, "y"] \ar[ll,dashed] \arrow[ru,hook]&& \cdots &&  &&     \\ 
&&  &&  && \cdots && (a,b-1) \arrow[ru, hook] \ar[rd, "y"] && (a+1,b) \arrow[ru, hook]  \ar[rd, "y"] \ar[ll,dashed] && \ar[ll,dashed] \cdots &&\cdots && &&      \\ 
& &&  &&  && \cdots && (a,b) \arrow[ru, hook] \ar[rd, "y"] && (a+1,b+1) \arrow[ru, hook] \ar[ll,dashed] \ar[rd, "y"]&& \cdots  && &&  \cdots &&    \\ 
&&  && &&  && \cdots && (a,b+1) \arrow[ru, hook] \ar[rd,"y"] && \cdots  && &&  (a+2, \infty) \arrow[ru,"y"] &&  &&    \\ 
&  &&  &&  &&  &&  \cdots && \cdots && && (a+1,\infty) \arrow[ru,"y"]  && &&   \\ 
&&  &&  &&  && \cdots && \cdots && && (a,\infty) \arrow[ru, "y"] && &&  &&  \\ 
&  &&  &&  &&  &&  \cdots && && \cdots \arrow[ru, "y"] &&  && &&   \\ 
&& \ && && &&  &&  && \cdots && &&  &&  &&   \\
\end{tikzcd}
}
\]

Thus, we see that the AR-quiver has two infinite components. In the component consisting of the infinite arcs none of them has an AR-translation.
\end{proposition}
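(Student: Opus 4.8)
The plan is to reconstruct the whole Auslander--Reiten quiver directly from the two preceding lemmas, using that in a Krull--Remak--Schmidt category admitting almost split sequences the quiver is determined by three pieces of data: its vertices (the indecomposable objects), its arrows (the irreducible morphisms), and the Auslander--Reiten translation $\tau$. I would verify each of these against the displayed picture in turn.

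First, for the vertices: by the classification recalled in Section \ref{Sub:MCM-Frobenius-cat} together with the arc dictionary of Section \ref{Sub:comb-model}, the indecomposables of $\CCC_2$ are exactly the finite arcs $(a,b)$ with $b-a\geq 1$ and the infinite arcs $(a,\infty)$, the boundary arcs $(a,a+1)$ being the indecomposable projective-injectives. These are precisely the labels appearing in the diagram. For the arrows I would use that the irreducible maps into an indecomposable $X$ are read off from the middle term of the almost split sequence ending at $X$. By Lemma \ref{Lem:finiteARseq}, for a finite non-boundary arc $(a,b)$ this middle term is $(a-1,b)\oplus(a,b-1)$, so the irreducible maps into $(a,b)$ are $(a-1,b)\to(a,b)$ and $(a,b-1)\to(a,b)$, and dually those out of $(a,b)$ go to $(a+1,b)$ and $(a,b+1)$. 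I would then sort these into the two families visible in the picture, namely the inclusion maps (the hook arrows) and the multiplication-by-$y$ maps (the arrows labelled $y$), and record $\tau((a,b))=(a-1,b-1)$ from the same lemma; this is exactly what glues the finite-arc arrows into the displayed meshes.

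For the infinite arcs I would invoke Lemma \ref{Lem:infARseq}: the only irreducible morphisms among them are $(a-1,\infty)\xrightarrow{y}(a,\infty)$, which produces the bottom $y$-chain, and there are no irreducible morphisms between a finite arc and an infinite arc. Consequently the infinite arcs form a connected component on their own, disjoint from the component containing all finite arcs, giving the two infinite components asserted. Finally, the last clause of Lemma \ref{Lem:finiteARseq} states that no infinite arc admits an almost split sequence starting or ending at it; hence $\tau$ is undefined on every $(a,\infty)$, which is exactly the claim that no object in the infinite-arc component has an Auslander--Reiten translation.

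The step I expect to require the most care is the bookkeeping at the mouth of the finite component. The boundary arcs $(a,a+1)$ are projective-injective and therefore fall outside the scope of Lemma \ref{Lem:finiteARseq}, so the irreducible maps incident to them---the outgoing $y$-arrow $(a,a+1)\to(a,a+2)$ and the incoming hook---must be pinned down separately, for instance by reading them off from the almost split sequences ending at the neighbouring arcs $(a,a+2)$. Beyond this, distinguishing the inclusion (hook) arrows from the $y$-arrows and checking that the labels are consistent throughout the mesh is the only genuinely computational part; everything else is a direct transcription of Lemmas \ref{Lem:finiteARseq} and \ref{Lem:infARseq}.
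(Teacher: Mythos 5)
Your proposal is correct and follows essentially the same route as the paper, which likewise deduces the finite-arc component from Lemma \ref{Lem:finiteARseq}, the infinite-arc component, the absence of irreducible maps between the two components, and the non-existence of AR-translates of infinite arcs from Lemma \ref{Lem:infARseq}. Your additional care about the projective-injective boundary arcs at the mouth of the finite component is a reasonable elaboration of a point the paper's proof leaves implicit, but it does not change the argument.
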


\begin{proof}
The shape of the component consisting of all the finite arcs follows from Lemma \ref{Lem:finiteARseq}. From Lemma \ref{Lem:infARseq} follow the shape of the component involving the infinite arcs, that there are no irreducible maps between the two components, and that infinite arcs do not have an AR-translation.
\end{proof}

\begin{remark}
The AR-quiver for the triangulated subcategory of $\mc{C}_2$ consisting of the non-projective finite arcs, i.e., the Holm--J{\o}rgensen category $\mc{D}$, has already been computed in \cite[Remark 1.4]{Holm-Jorgensen} as $\ZZ A_\infty$. From another point of view, Igusa and Todorov also computed the AR-quiver of their category in \cite[Lemma 2.4.11]{IgusaTodorov-cyclic}. Note that if we take the generically free modules in our category $\mc{C}_2$ and stabilise, we recover their results. \\
Further note that the AR-quiver of the ungraded category $\CM(\CC\llbracket x,y \rrbracket / (x^2))$ has been computed in the context of singularity theory in \cite{Schreyer} (also cf. \cite{Leuschke-Wiegand, Yoshino}).
\end{remark}

\begin{remark} 
For completeness, we also indicate the AR-triangles in the category $\mc{C}$: For a finite arc $\gamma=(a,b)$ with $a <b-1$ we have  
the AR triangle, where again we write shorthand $(a,b)$ for the object $M_{(a,b)}$:
\begin{equation}(a-1,b-1) \xrightarrow{} (a-1,b)\oplus (a,b-1) \xrightarrow{} (a,b) \xrightarrow{} (a-1,b-1)[1]. \label{ARtriangle-stable}\end{equation}

Further, in $\mc{C}$ the triangle \eqref{ARtriangle-stable} for $b=a+2$ has a different form (since the only  projectives are $0$):
\begin{equation} (a-1,a+1) \xrightarrow{} (a-1,a+2) \xrightarrow{} (a,a+2) \xrightarrow{} (a-1,a+1)[1]. \label{ARtriangle-stable-rand}\end{equation}

Moreover, in $\mc{C}$ we do not have the projectives, so the exchange sequence for any finite arc $(a,b)$ is  
\begin{equation} \label{eq:exchangetriangle}(a,b) \xrightarrow{} 0  \xrightarrow{} (a-1,b-1) \xrightarrow{} (a,b)[1] \ ,
\end{equation}
and in terms of objects in $\mc{C}$ we have $M_{(a-1,b-1)} \cong M_{(a,b)}[1]$, see \cite[Lemma 1]{CalderoKellerI}.
\end{remark}

\subsection{Paquette--Y{\i}ld{\i}r{\i}m cluster character for $\mc{C}$} \label{PY-cluster-char}

Paquette--Y{\i}ld{\i}r{\i}m define a cluster character on $\mc{C}$ with respect to a cluster tilting subcategory $\mc{T}$ for any $M \in \mc{C}$ via the formula
\begin{equation} \label{eq:PY-clusterchar} 
\chi_\mc{T}(M)=x^{-\coind(\varphi(M))}\sum_{d}\chi(\mathrm{Gr}_d(\varphi(M)))x^{\coind(d)-\ind(d)} \ .
\end{equation}
Here $\varphi: \mc{C} \xrightarrow{} \textrm{Mod} \mc{T}$ is the covariant functor that sends $M$ to $\Hom_\mc{C}(-,M)$ and $d$ runs over dimension vectors of finitely presented submodules of $\varphi(M)$, see \cite[Section 5]{Paquette-Yildirim}. Index and Co-index are defined in \cite[Section 6.2]{Paquette-Yildirim}. \\
Note that the variables $x$ are in the homogeneous coordinate ring of the infinite Grassmannian $Gr(2,\infty)$ with the frozen  variables $x_{i,i+1}$ set to $1$, namely $\CC[x_{ij}: i,j \in \ZZ]/($Pl\"ucker relations$)$, where $\deg(x_{ij})=1$ and $i<j-1$. See \cite[Appendix]{Grabowski-Gratz-Groechenig} for more details. Choosing the cluster tilting subcategory $\mc{T}$ means to choose a fountain triangulation in the $\infty$-gon. Thus, we have variables $x_{ab}$, where $(a,b)$ corresponds to an arc in $\mc{T}$ (we may assume here that $a<b$). For  any $T \in \mc{T}$ we denote by $[T]$ the corresponding class in $K_0(\mc{C})$, and if $[T]=\sum_{i \in I}m_i[T_i]$, then $T_i$ is an indecomposable corresponding to an arc $(a_i,b_i)$ in $\mc{T}$. Thus here we mean with $x^{[T]}=\prod_{i \in I}x_{a_ib_i}^{m_i}$. Note that $\chi_{\mc{T}}(M)$ is an element of $\CC\llbracket x_{a,b}^\pm: (a,b) \in \mc{T} \rrbracket$.

In \cite[Prop.~6.9]{Paquette-Yildirim} it is shown that $\chi_{\mc{T}}(M)$ satisfies the product formula, that is, 
$$\chi_{\mc{T}}(M_1 \oplus M_2)=\chi_{\mc{T}}(M_1) \cdot \chi_{\mc{T}}(M_2) $$
for any $M_1, M_2 \in \mc{C}$. However, since $\mc{C}$ is not $2$-CY, the exchange formula for the cluster character only holds on the subcategory of $\mc{C}$ corresponding to the finite arcs in the completed $\infty$-gon, see \cite[Theorem 6.13]{Paquette-Yildirim}.

\subsection{Cluster character for $\mc{C}_2$}

Now we switch to the Frobenius category $\mc{C}_2$ and show that there is a cluster character too, see Def. \ref{Def:ourCC}. In Section \ref{Sub:relations} we will then show that our cluster character satisfies the generalised frieze relations, matching up with (infinite) frieze patterns with coefficients in the sense of Cuntz, Holm, and J{\o}rgensen \cite{CHJ-coefficients}, see Theorem \ref{Thm:half-friezes-coeffs}.

\begin{definition} \label{Def:ourCC}
Let $\S$ be a cluster tilting subcategory of $\mc{C}_2$ and let $M \in \mc{C}_2$ an object. Then the cluster character is 
\begin{equation} \label{Eq:CC-Frobenius}
X^\S(M) := x^{\ind_\S(M)} \sum_{e} \chi\left(\mathrm{Gr}_{e}(\Ext^1_{\mc{C}_2}(-,M))\right) x^{\coind_\S(e)-\ind_\S(e)} \ ,
\end{equation}
where the sum runs again over all  finitely presented dimension vectors $e$ in $\Ext^1(-,M)$. Note that $X^\S(M) \in \CC\llbracket x_{a,b}^\pm: (a,b) \in \S \rrbracket=:R_\S$. Furthermore, since all the boundary arcs $(n,n+1)$ have to be contained in $\S$, we have that $x_{n,n+1}$ are variables in $R_\S$ and $X^\S((n,n+1))=x_{n,n+1}$ (cf.~Lemma \ref{Lem:CC-frieze-coeff}) are precisely the frozen variables in the homogeneous coordinate ring of $Gr(2, \infty)$.
\end{definition}

Our cluster character follows the general principle of defining a cluster character for a 2-Calabi-Yau extriangulated category equipped with a cluster tilting object as outlined in \cite[Section 4.3]{wang-wei-zhang}. The only and the main obstacle is that our cluster tilting subcategories are not additively generated by a single object, they have infinitely many indecomposable objects up to isomorphisms. This situation, in the triangulated case, is treated carefully by Paquette and Y{\i}ld{\i}r{\i}m. We note the isomorphisms 
\begin{align*}
    \Ext_{\mc{C}_2}^1(X,M) \cong \Hom_{\underline{\mc{C}_2}}(\Omega X, M) \cong \Hom_{\underline{\mc{C}_2}}(X, \Omega^{-1}M) \cong \Hom_{\mc{C}}(\pi X, \pi M[1])
\end{align*}
where $\pi$ denotes the composition 
\begin{align*}
\mc{C}_2 \to \underline{\mc{C}_2} \xrightarrow{\sim} \mc{C}.
\end{align*}
Hence, the Grassmannian in our cluster character is meaningful: it agrees with the Grassmannian defined by Paquette and Y{\i}ld{\i}r{\i}m when the object $M$ is not projective. When $M$ is projective, its image in the stable category is isomorphic to the zero object and so is its shift. So, the Euler characteristic returns zero. As every projective object belongs to the cluster tilting subcategory $\S$, this aligns with Lemma \ref{Lem:CC-frieze-coeff}(a) below.

Secondly, we note that there is an equality $\ind_S(M) = -\coind_S(\Omega^{-1}M)$ which also proves that for a non-projective object $M$ we have an equality (cf. \cite[Theorem 3.3(b)]{Keller-Fu} and \cite[Proposition 4.5]{wang-wei-zhang}) 
\begin{align*}
X^\S(M) = \chi_{\pi S}(\pi M [1]) \  ,
\end{align*}
where $\pi: \mc{C}_2 \rightarrow \mc{C}$ is the projection on the stable category.
From these two remarks, it follows that our formula does indeed give a cluster character, when restricted to generically free maximal Cohen--Macaulay modules. Here we note that the subcategory of generically free maximal Cohen--Macaulay modules in $\mc{C}_2$ is indeed stably $2$-CY, see \cite[Prop.~3.12]{August-Cheung-Faber-Gratz-Schroll}.

\subsection{The frieze relations} \label{Sub:relations}
Here we prove that the cluster character $X^{\S}$ for $\mc{C}_2$ and $\chi_{\mc{T}}$ for $\mc{C}$ for $\S \subseteq \mc{C}_2$, $\mc{T} \subseteq \mc{C}$ cluster tilting subcategories satisfies the frieze relations on the subcategory corresponding to finite arcs in $\mc{C}_2$, resp. $\mc{C}$. Then we define infinite (half-)friezes (with coefficients). Finally, we show that indeed the cluster character on $\mc{C}_2$ and $\mc{C}$ yields infinite (half-)friezes in Theorem \ref{Thm:half-friezes-coeffs}.

\begin{remark} \label{Sub:triangulations}
Here we quickly recall the relation between cluster tilting subcategories of our categories $\CCC$ and $\CCC_2$ and triangulations of the completed $\infty$-gon: any cluster tilting subcategory of $\CCC$ or $\CCC_2$ corresponds to a fountain triangulation of the $\infty$-gon, see \cite[Theorem 4.4]{Paquette-Yildirim} for the triangulated case and \cite[Theorem 4.11]{ACFGS} for the Frobenius case, also cf.~Section \ref{Sub:comb-model}. \\
If one restricts to the subcategory of generically free modules $\CCC_2^f$ of $\CCC_2$ or the Holm--J{\o}rgensen category in $\CCC$ (that is, finite arcs in the $\infty$-gon), then the cluster tilting subcategories correspond to either fountain triangulations or locally finite triangulations (so-called leapfrog triangulations), see \cite[Theorem 4.10]{ACFGS}. Also see Fig.~\ref{Fig:fountain} for a schematic picture of a fountain triangulation and the upper left triangulation in Fig.~\ref{Fig:Triangulations} for a schematic picture of a leapfrog triangulation. \\
Furthermore, in Section \ref{Sub:comb-model} schematic pictures of all triangulations of the completed $\infty$-gon are shown.
\end{remark}

\begin{lemma} \label{Lem:CC-frieze}
Consider the stable category $\mc{C}$ together with a cluster tilting subcategory $\mc{T}$ and the cluster character $\chi_\mc{T}$ from Section \ref{PY-cluster-char}. Let $\{ \alpha_i \}_{i \in I}$ be the triangulation of the $\infty$-gon corresponding to $\mc{T}$.
\begin{enumerate}[(a)]
\item If $\alpha_i$ is in the triangulation, that is, the indecomposable $M_{\alpha_i}$ is in $\mc{T}$, then $\chi_{\mc{T}}(M_{\alpha_i}[1])=1$.
\item The cluster character satisfies the frieze relation
\begin{equation} \label{eq:frieze-middle}
\chi_{\mc{T}}(M_{(a-1,b-1)}) \cdot \chi_{\mc{T}}(M_{(a,b)}) = \chi_{\mc{T}}(M_{(a-1,b)}) \cdot \chi_{\mc{T}}(M_{(a,b-1)}) +1 
\end{equation}
for any integers $a +2< b$, and  
\begin{equation} \label{eq:frieze-rand}
 \chi_{\mc{T}}(M_{(a-1,a+1)}) \cdot \chi_{\mc{T}}(M_{(a,a+2)}) = \chi_{\mc{T}}(M_{(a-1,a+2)}) +1 
 \end{equation}
for $b=a+2$.
\end{enumerate}
\end{lemma}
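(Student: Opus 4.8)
\emph{Part (a).} The plan is to show that the $\mc{T}$-module $\varphi(M_{\alpha_i}[1])$ is zero, so that the defining formula \eqref{eq:PY-clusterchar} collapses to $1$. Recall that $\varphi(M)=\Hom_{\mc{C}}(-,M)$ regarded as a module over $\mc{T}$. Since $\mc{T}$ is a cluster tilting (in particular rigid) subcategory, we have $\Hom_{\mc{C}}(T,T'[1])=0$ for all $T,T'\in\mc{T}$; taking $T'=M_{\alpha_i}\in\mc{T}$ shows $\Hom_{\mc{C}}(T,M_{\alpha_i}[1])=0$ for every $T\in\mc{T}$, that is $\varphi(M_{\alpha_i}[1])=0$. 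For the zero module the only finitely presented submodule has dimension vector $d=0$, its quiver Grassmannian is a point, and both index and coindex vanish, so \eqref{eq:PY-clusterchar} gives $\chi_{\mc{T}}(M_{\alpha_i}[1])=x^{0}\cdot 1=1$.

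\emph{Part (b).} First I would fix the exchange pair $X=M_{(a-1,b-1)}$ and $Y=M_{(a,b)}$. The arcs $(a-1,b-1)$ and $(a,b)$ cross exactly when $a<b-1$, i.e.\ $b\geq a+2$, which is precisely the range occurring in \eqref{eq:frieze-middle} and \eqref{eq:frieze-rand}. The two exchange triangles attached to this pair are the AR-triangle \eqref{ARtriangle-stable} (of the form $X\to B\to Y\to X[1]$) and the degenerate exchange triangle \eqref{eq:exchangetriangle} (of the form $Y\to 0\to X\to Y[1]$), whose middle term is the zero object because the boundary arcs are zero in $\mc{C}$. As all arcs involved are finite, the exchange formula \cite[Theorem 6.13]{Paquette-Yildirim} applies and yields
\begin{equation*}
\chi_{\mc{T}}(X)\,\chi_{\mc{T}}(Y)=\chi_{\mc{T}}(B)+\chi_{\mc{T}}(0),
\end{equation*}
and by the computation in part (a) the degenerate term is $\chi_{\mc{T}}(0)=1$; this is the source of the summand $+1$.

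It then remains to evaluate $\chi_{\mc{T}}(B)$, where I would distinguish the two cases according to the shape of the AR-triangle. For $a+2<b$ the middle term is $B=M_{(a-1,b)}\oplus M_{(a,b-1)}$ with both summands finite arcs, so the product formula \cite[Prop.~6.9]{Paquette-Yildirim} gives $\chi_{\mc{T}}(B)=\chi_{\mc{T}}(M_{(a-1,b)})\,\chi_{\mc{T}}(M_{(a,b-1)})$ and hence \eqref{eq:frieze-middle}. For $b=a+2$ the AR-triangle degenerates to \eqref{ARtriangle-stable-rand}, since the arc $(a,a+1)$ is a boundary arc and therefore zero in $\mc{C}$; thus $B=M_{(a-1,a+2)}$ and we obtain \eqref{eq:frieze-rand} at once. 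The main obstacle, and the reason the case $b=a+2$ must be handled separately, is exactly that $\mc{C}$ is not $2$-Calabi--Yau, so the exchange formula is only available on the finite-arc subcategory; the one delicate point is the bookkeeping of which outer arcs land on the boundary and hence become the zero object, contributing a factor $1$ through the product formula.
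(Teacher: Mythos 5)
Your proposal is correct and follows essentially the same route as the paper: part (a) by observing $\varphi(M_{\alpha_i}[1])=0$ so the formula \eqref{eq:PY-clusterchar} collapses to $1$, and part (b) by combining the AR-triangle \eqref{ARtriangle-stable} (resp.\ \eqref{ARtriangle-stable-rand}) with the degenerate exchange triangle \eqref{eq:exchangetriangle} and invoking the exchange and multiplication properties of $\chi_{\mc{T}}$, with $\chi_{\mc{T}}(0)=1$ supplying the summand $+1$. Your write-up is somewhat more explicit than the paper's (e.g.\ justifying $\varphi(M_{\alpha_i}[1])=0$ via rigidity and tracking which arcs degenerate to the zero object), but there is no substantive difference in method.
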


\begin{proof} First note that the cluster character of $\mc{C}$ with respect to $\mc{T}$ for a finitely presented module $M$ is given by the formula \eqref{eq:PY-clusterchar}. Property (a) follows from substituting $M_{\alpha_i}[1]$ into this formula: since $M_{\alpha_i} \in \mc{T}$, we have $\varphi(M_{\alpha_i}[1])=0$ and thus the product gives $1$ and since $\coind(0)=0$, \eqref{eq:PY-clusterchar} yields $\chi_\mc{T}(M_{\alpha_i}[1])=1$.

(b) follows from \eqref{ARtriangle-stable} resp. \eqref{ARtriangle-stable-rand} and \eqref{eq:exchangetriangle} and using the exchange and the multiplication property of the cluster character (see \cite[Section 6.2]{Paquette-Yildirim}): consider $M_{(a,b)}$ with $a+2<b$, then the AR-triangle of this module and its exchange sequence give
$$ \chi_\mc{T}(M_{(a,b)}) \cdot \chi_{\mc{T}}(M_{(a-1,b-1)})=\chi_{\mc{T}}(M_{(a-1,b)}) \cdot \chi_{\mc{T}}(M_{(a,b-1)})+1 \ ,$$
since $\chi_{\mc{T}}(0)=1$. Similarly, for $a+2=b$, using \eqref{ARtriangle-stable-rand},  relation \eqref{eq:frieze-rand} follows. 
\end{proof}

\begin{lemma} \label{Lem:CC-frieze-coeff} 
Consider the Frobenius category $\mc{C}_2$ together with a cluster tilting subcategory $\S$ and our cluster character $X^\S$ from Def.~\ref{Def:ourCC}.
\begin{enumerate}[(a)]
\item If $\alpha=(a,b)$ is in the triangulation corresponding to $\S$, that is, $M_{\alpha}$ is in $\S$, then $X^\S(M_{\alpha})=x_{a,b}$.
\item The cluster character satisfies the generalised frieze relation
\begin{equation} \label{eq:frieze-middle-coeff}
X^\S(M_{(a-1,b-1)}) \cdot X^\S(M_{(a,b)}) = X^\S(M_{(a-1,b)}) \cdot X^\S(M_{(a,b-1)}) + x_{a-1,a}\cdot x_{b-1,b} 
\end{equation}
for any integers $a +1< b$. 
\end{enumerate}
\end{lemma}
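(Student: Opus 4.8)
The plan is to mirror the proof of Lemma~\ref{Lem:CC-frieze}, using the multiplication and exchange properties of $X^\S$, but to read off the coefficient $x_{a-1,a}\cdot x_{b-1,b}$ from the exchange sequence \eqref{eq:exchangeseq}, whose middle term now consists of the \emph{projective} (boundary) objects $(a-1,a)$ and $(b-1,b)$ rather than the zero object. This is precisely where the passage from $\mc{C}$ to $\mc{C}_2$ replaces the constant $1$ of \eqref{eq:frieze-middle} by a genuine product of frozen variables.

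First I would dispose of part (a). If $M_\alpha\in\S$ then, since $\S$ is cluster tilting and hence rigid, $\Ext^1_{\mc{C}_2}(N,M_\alpha)=0$ for every $N\in\S$; thus the $\S$-module $\Ext^1_{\mc{C}_2}(-,M_\alpha)$ is zero, the only dimension vector contributing to the sum in \eqref{Eq:CC-Frobenius} is $e=0$, and $\mathrm{Gr}_0(0)$ is a point. Since $\coind_\S(0)-\ind_\S(0)=0$, the sum collapses to $1$ and $X^\S(M_\alpha)=x^{\ind_\S(M_\alpha)}$. As $M_\alpha\in\S$ we have $\ind_\S(M_\alpha)=[M_\alpha]$, so $x^{\ind_\S(M_\alpha)}=x_{a,b}$. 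This argument is uniform in $\alpha$, so it applies in particular to the boundary arcs and yields $X^\S((a-1,a))=x_{a-1,a}$ and $X^\S((b-1,b))=x_{b-1,b}$, which are exactly the coefficients needed in (b).

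For part (b), I would first observe that under the standing hypothesis $a+1<b$ all four objects $(a-1,b-1),(a,b),(a-1,b),(a,b-1)$ are finite arcs, and that $(a,b)$ and $(a-1,b-1)$ cross (indeed $a-1<a<b-1<b$), so $\Ext^1_{\mc{C}_2}((a,b),(a-1,b-1))$ is one-dimensional and these two objects form an exchange pair. On the subcategory of generically free modules, i.e. the finite arcs, $\mc{C}_2$ is stably $2$-Calabi--Yau \cite[Prop.~3.12]{August-Cheung-Faber-Gratz-Schroll}, so $X^\S$ restricts to a genuine cluster character in the sense of \cite[Section~4.3]{wang-wei-zhang}; in particular it satisfies the multiplication property $X^\S(M\oplus N)=X^\S(M)\cdot X^\S(N)$ (cf.\ \cite[Prop.~6.9]{Paquette-Yildirim}) and the exchange relation attached to the two short exact sequences realising this exchange pair. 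These two sequences are precisely the AR-sequence \eqref{AR-sequence-Frobenius} and the exchange sequence \eqref{eq:exchangeseq}, giving
\begin{equation*}
X^\S((a,b))\cdot X^\S((a-1,b-1)) = X^\S\!\big((a-1,b)\oplus(a,b-1)\big) + X^\S\!\big((a-1,a)\oplus(b-1,b)\big).
\end{equation*}
Applying the multiplication property to each summand and then part (a) to the boundary arcs $(a-1,a)$ and $(b-1,b)$ rewrites the right-hand side as $X^\S((a-1,b))\cdot X^\S((a,b-1)) + x_{a-1,a}\cdot x_{b-1,b}$, which is exactly \eqref{eq:frieze-middle-coeff}. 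Note that when $b=a+2$ the term $(a,b-1)=(a,a+1)$ is itself a boundary arc, so part (a) also covers that degenerate case; this is why, unlike in Lemma~\ref{Lem:CC-frieze}, a single relation suffices here.

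The main obstacle is justifying the exchange relation for $X^\S$ at the level of the Frobenius category, since $\mc{C}_2$ is not globally $2$-CY and the exchange sequence \eqref{eq:exchangeseq} has a nonzero (projective) middle term. The point to verify carefully is that restricting to the stably $2$-CY subcategory of generically free modules legitimises the cluster-character formalism of \cite{wang-wei-zhang}, and that this projective middle term contributes the product of frozen variables $x_{a-1,a}\cdot x_{b-1,b}$ through part (a), rather than collapsing to $1$ as it does after projecting to the stable category $\mc{C}$ in the proof of Lemma~\ref{Lem:CC-frieze}.
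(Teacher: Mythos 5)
Your proposal is correct and follows essentially the same route as the paper: part (a) via rigidity of $\S$ and the index computation $x^{\ind_\S(M_\alpha)}=x_{a,b}$, and part (b) via the exchange pair realised by the AR-sequence \eqref{AR-sequence-Frobenius} and the exchange sequence \eqref{eq:exchangeseq}, combined with the multiplication and exchange rules of \cite{wang-wei-zhang} and part (a) applied to the boundary arcs. Your explicit remark that the case $b=a+2$ is absorbed because $(a,a+1)$ is itself a boundary arc is a helpful elaboration of a point the paper leaves implicit.
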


\begin{proof}
The first part follows by plugging $M_{(a,b)}$ for an indecomposable $M_{(a,b)} \in \S$ into expression \eqref{Eq:CC-Frobenius}: $M_{(a,b)}$ is rigid and  moreover, we have $x^{\ind_\S(M_{(a,b)})}=x_{a,b}$, since $(a,b)$ is in the triangulation of the $\infty$-gon corresponding to $\S$. Thus $X^\S(M_{(a,b)})=x_{a,b}$.

Part (b) works analogously to the proof of (b) of Lemma \ref{Lem:CC-frieze} using the exact sequences for the Frobenius category \eqref{AR-sequence-Frobenius} and \eqref{eq:exchangeseq}, as well as the multiplication and exchange rules for the cluster character (see \cite[Theorem 4.4]{wang-wei-zhang}) and (a) for $X^\S(M_{a,a+1})=x_{a,a+1}$ for any boundary arc $(a,a+1)$.
\end{proof}

The previous Lemma motivates the definition of infinite friezes with coefficients, adapting the finite definition of \cite[Def.~2.1.]{CHJ-coefficients}.
\begin{definition} \label{Def:inf-frieze-coeff}
An \emph{infinite frieze with coefficients} is an  infinite array of elements $m_{a,b}$ in a commutative ring $R$ with $a,b  \in \ZZ$ and $a \leq b$ of the form \eqref{Fig:inf-frieze-coeff} satisfying
\begin{enumerate}
\item $m_{a,a}=0$ for all $a \in \ZZ$.
\item $m_{a,a+1}=c_{a,a+1} \in R\backslash \{0
\}$ for all $a \in \ZZ$.
\item $m_{a,b} m_{a+1,b+1} - m_{a,b+1}m_{a+1,b}=c_{a,a+1} c_{b,b+1}$ for any $a < b \in \ZZ$
\end{enumerate}
\begin{equation} \label{Fig:inf-frieze-coeff}
{\tiny
\begin{tikzcd}[row sep=0.35em, column sep=-0.95em]
  & 0 &&0 && 0 && 0 && 0&& 0  &&0&&  0&& 0 && 0 && 0 &&  \\ 
&& c_{-5,-4} && c_{-4,-3} && c_{-3,-2} && c_{-2,-1} && c_{-1,0} &&c_{0,1} &&c_{1,2} && c_{2,3} && c_{3,4} && c_{4,5} && c_{5,6}    \\  
  & m_{-6,-4} && m_{-5,-3} && m_{-4,-2} && m_{-3,-1} && m_{-2,0}&& m_{-1,1}  &&m_{02}&&  m_{13}&& m_{24} && m_{35} && m_{46} &&  \\ 
&& m_{-6,-3} && m_{-5,-2} && m_{-4,-1} && m_{-3,0} && m_{-2,1}  && m_{-1,2} && m_{03} && m_{14} && m_{25} && m_{36} && m_{47} &&   \\ 
& m_{-7,-3} && m_{-6,-2} && m_{-5,-1} && m_{-4,0} &&  m_{-3,1} && m_{-2,2} && m_{-1,3} && m_{04} && m_{15} && m_{27} && m_{38} && m_{49} &&   \\ 
&& m_{-7,-2} && m_{-6,-1} && m_{-5,0} && m_{-4,1} && m_{-3,2}  && m_{-2,3}  && m_{-1,4} && m_{05} &&  m_{26} && m_{37} && m_{48} &&    \\ 
& m_{-8,-2} && m_{-7,-1}  && m_{-6,0} && m_{-5,1} && m_{-4,2}  && m_{-3,3}&& m_{-2,4}  && m_{-1,5}  && m_{06} &&  m_{17} && m_{28}  && m_{39} && \\ 
&& m_{-8,-1} &&m_{-7,0}&& m_{-6,1} && m_{-5,2} && m_{-4,3} && m_{-3,4}  && m_{-2,5} && m_{-1,6}  && m_{07} && m_{18} &&m_{29}&&      \\ 
&  m_{-9,-1} && m_{-8,0} && m_{-7,1} && m_{-6,2} && m_{-5,3}&& m_{-4,4}&& m_{-3,5} && m_{-2,6} && m_{-1,7} &&m_{08} && m_{19} && m_{2,10} &&  \\ 
&& \ddots && \ddots && \ddots && \ddots && \ddots && \ddots && \ddots && \ddots && \ddots && \ddots && \ddots \\
\end{tikzcd}
}
\end{equation}
Similarly as in Definition \ref{Def:infinite-half-frieze} we may define a   \textit{(right) half-frieze with coefficients} (centered at $0$)  as an array of elements $m_{a,b} \in R$, $a \leq b$ such that
    \begin{enumerate}
        \item $m_{a,a} = 0$ for any $a \geq 0$.
        \item $m_{a,a+1} = c_{a,a+1} \in R\backslash\{0\}$ for any $a \geq 0$.
        \item $m_{a,b} \; m_{a+1, b+1} - m_{a+1, b} \;m_{a, b+1} = c_{a,a+1} \: c_{b,b+1}$ for any pair $(a,b)$ with $0 \leq a \leq b$.
    \end{enumerate}
Analogously one defines left half-friezes with coefficients. Putting together a left half-frieze and a right half-frieze with coefficients centered at $0$, we obtain a \emph{fountain frieze with coefficients}: \\

\begin{equation} \label{Fig:half-frieze-coeff}
{\tiny
\begin{tikzcd}[row sep=0.35em, column sep=-0.75em]
  & 0 &&0 && 0 && 0 && 0&& 0  &&0&&  0&& 0 && 0 && 0 &&  \\ 
&& c_{-5,-4} && c_{-4,-3} && c_{-3,-2} && c_{-2,-1} && c_{-1,0} &&c_{0,1} &&c_{1,2} && c_{2,3} && c_{3,4} && c_{4,5} && c_{5,6}    \\  
  & m_{-6,-4} && m_{-5,-3} && m_{-4,-2} && m_{-3,-1} && m_{-2,0}&& \phantom{m}  &&m_{02}&&  m_{13}&& m_{24} && m_{35} && m_{46} &&  \\ 
&& m_{-6,-3} && m_{-5,-2} && m_{-4,-1} && m_{-3,0} &&  &&  && m_{03} && m_{14} && m_{25} && m_{36} && m_{47} &&   \\ 
& m_{-7,-3} && m_{-6,-2} && m_{-5,-1} && m_{-4,0} &&  &&  &&  && m_{04} && m_{15} && m_{27} && m_{38} && m_{49} &&   \\ 
&& m_{-7,-2} && m_{-6,-1} && m_{-5,0} &&  &&  &&  &&  && m_{05} &&  m_{26} && m_{37} && m_{48} &&    \\ 
& m_{-8,-2} && m_{-7,-1}  && m_{-6,0} &&  &&  && &&  &&  && m_{06} &&  m_{17} && m_{28}  && m_{39} &&  \\ 
&& m_{-8,-1} &&m_{-7,0}&&  &&  &&  &&  && &&  && m_{07} && m_{18} &&m_{29}&&     \\ 
&  m_{-9,-1} && m_{-8,0} && && && && &&  &&  &&  &&m_{08} && m_{19} && m_{2,10} &&  \\ 
&& m_{-9,0} && &&  &&  &&  &&  && && &&  && m_{08} && m_{19} && &&      \\ 
\end{tikzcd}
}
\end{equation}
When all the $c_{ab}=1$ for $a <b$, then we are back to the  (half-)friezes resp.~ fountain friezes of Definition \ref{Def:infinite-half-frieze}.
\end{definition}

\begin{remark}
We can also define friezes of different shapes for the other possible triangulations of the $\infty$-gon: \emph{split fountain friezes} consist of a left half frieze centered at some $a$ and a right half frieze centered at some $b >a$ together with a finite CC-frieze in the middle. It would be interesting to study these friezes in future work.
\end{remark}

\begin{definition} \label{Def:shift-frieze}

Let $\mc{F}$ be a infinite (half-)frieze (with coefficients) with entries $m_{a,b}$ as in Definition \ref{Def:inf-frieze-coeff}. Then we denote by $\mc{F}[i]$ the \emph{ $i$-th shift} of the (half-)frieze (with coefficients), which is again a (half-)frieze (with coefficients) with entries $m'_{a,b}$ satisfying 
\[ m'_{a,a+1}=c_{a-i,a-i+1}, \  m'_{a,b}=m_{a-i,b-i} \  \] 
for appropriate $a, b \in \ZZ$.
\end{definition}

\begin{theorem} \label{Thm:half-friezes-coeffs}
\begin{enumerate}
\item
Let $\mc{T}$ be a cluster tilting subcategory of the triangulated category $\mc{C}$. Then the cluster character $\chi_\mc{T}$ \eqref{eq:PY-clusterchar} yields an infinite fountain frieze $\mc{G}$ as in Definition \ref{Def:inf-frieze-coeff}, but without the rows of $0$'s and $c_{ab}$'s on top. In particular, $\chi_\mc{T}(M_{(a,b)}[1])=1$ for any indecomposable $M_{(a,b)} \in \mc{T}$. The remaining entries in this half-frieze can be calculated using Lemma \ref{Lem:CC-frieze}. 
\item
Let $\S$ be a cluster tilting subcategory of the Frobenius category $\mc{C}_2$. Then the cluster character $X^\S$ yields an infinite fountain frieze $\mc{F}$ with coefficients: $\S$ contains all projective-injectives, that is, $M_{(i,i+1)}$ for $i \in \ZZ$, so we have $X^\S(M_{(i,i+1)})=x_{i,i+1}$ and for any arcs $M_{(a,b)} \in \S$ (wlog $a<b$) we also have $X^\S(M_{(a,b)})=x_{a,b}$.  The remaining  entries in this fountain frieze with coefficients can be calculated with Lemma \ref{Lem:CC-frieze-coeff}. \\
Further, taking all $M_{(a,b)} \in \S$ with $a<b-1$ yields a cluster tilting subcategory of $\mc{C}$ with fountain frieze $\mc{G}$ as in (1). Setting $x_{a,b}=1$ for any indecomposable $M_{(a,b)} \in \S$ in the frieze $\mc{F}$, we obtain the fountain frieze $\mc{G}[1]$ of (1) (that is, the entries of the half frieze of (1) shifted one place to the left).
\end{enumerate}
\end{theorem}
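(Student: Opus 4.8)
The plan is to arrange the cluster character values into the array of Definition~\ref{Def:inf-frieze-coeff}, placing the value attached to the arc $(a,b)$ in row $a$ and column $b$ as in \eqref{Eq:array-frieze}, and then to verify the defining axioms of a (coefficient) fountain frieze one by one. That the resulting array has the fountain shape \eqref{Fig:half-frieze-coeff} will be immediate from the correspondence between cluster tilting subcategories and fountain triangulations (Remark~\ref{Sub:triangulations}): the fountain point splits the array into a left and a right half-frieze meeting there. Nonvanishing of the entries (the condition of Definition~\ref{Def:infinite-half-frieze} one expects of a genuine frieze) follows from the shape of the cluster character, since the summand $e=0$ contributes $\chi(\mathrm{Gr}_0)=1$, so $X^\S(M)=x^{\ind_\S(M)}(1+\cdots)$ and likewise for $\chi_\mc{T}$, a nonzero element.

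\emph{Part (1).} I would put $m_{a,b}=\chi_\mc{T}(M_{(a,b)})$ for $a<b$ and $m_{a,a}=0$. Because $\mc{C}$ has no nonzero projective-injectives there are no boundary arcs, so neither the diagonal $m_{a,a}=0$ nor a row $m_{a,a+1}$ is a genuine value of $\chi_\mc{T}$; this is exactly why $\mc{G}$ lacks the top two rows of Definition~\ref{Def:inf-frieze-coeff}, the quiddity row $m_{a,a+2}=\chi_\mc{T}(M_{(a,a+2)})$ now sitting on top. Axiom (3) with all $c_{ab}=1$ is then read off from Lemma~\ref{Lem:CC-frieze}: for $b>a+2$ it is precisely \eqref{eq:frieze-middle}, while the top instance $b=a+2$ is \eqref{eq:frieze-rand}, where the implicit value $m_{a,a+1}=1$ takes the place of the absent coefficient row. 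Finally, Lemma~\ref{Lem:CC-frieze}(a) combined with $M_{(a,b)}[1]\cong M_{(a-1,b-1)}$ pins down the positions carrying the value $1$: they are the $\tau$-translates $(a-1,b-1)$ of the arcs $(a,b)\in\mc{T}$.

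\emph{Part (2).} Now put $m_{a,b}=X^\S(M_{(a,b)})$ and $m_{a,a}=0$. The boundary arcs $(i,i+1)$ are the projective-injectives of $\mc{C}_2$ and lie in $\S$, so Lemma~\ref{Lem:CC-frieze-coeff}(a) gives $m_{i,i+1}=X^\S(M_{(i,i+1)})=x_{i,i+1}$; these are the coefficients $c_{i,i+1}=x_{i,i+1}$, settling axiom (2). Axiom (3) for $b>a+1$ is exactly the generalised relation \eqref{eq:frieze-middle-coeff}, and the remaining instance $b=a+1$ is automatic: substituting $m_{a+1,a+1}=0$, $m_{a,a+1}=c_{a,a+1}$ and $m_{a+1,a+2}=c_{a+1,a+2}$ reduces both sides to $c_{a,a+1}\,c_{a+1,a+2}$. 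The fountain triangulation again yields the fountain shape, and Lemma~\ref{Lem:CC-frieze-coeff}(a) shows that the entry at $(a,b)$ equals the cluster variable $x_{a,b}$ precisely when $(a,b)\in\S$.

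\emph{The comparison, and the main obstacle.} For the last assertion, restricting to arcs with $a<b-1$ discards exactly the boundary arcs, and the surviving objects form a cluster tilting subcategory $\mc{T}=\pi\S$ of $\mc{C}$ (Remark~\ref{Sub:triangulations}). The identity $X^\S(M)=\chi_{\pi\S}(\pi M[1])$ for nonprojective $M$, established after Definition~\ref{Def:ourCC}, together with $\pi M_{(a,b)}[1]\cong M_{(a-1,b-1)}$, gives after specialising every cluster variable to $1$ that $X^\S(M_{(a,b)})|_{x=1}=\chi_\mc{T}(M_{(a-1,b-1)})|_{x=1}$; hence the $(a,b)$-entry of $\mc{F}$ becomes the $(a-1,b-1)$-entry of $\mc{G}$, which by Definition~\ref{Def:shift-frieze} is the $(a,b)$-entry of $\mc{G}[1]$. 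The main obstacle is precisely this bookkeeping of the one-place shift: one must check that the coefficient row $c_{i,i+1}=x_{i,i+1}$ of $\mc{F}$, after specialisation, collapses onto the suppressed implicit row of $1$'s of $\mc{G}[1]$, and that the stable relation $\pi M_{(a,b)}[1]\cong M_{(a-1,b-1)}$ is the sole source of the shift.
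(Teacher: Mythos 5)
Your proposal is correct and follows essentially the same route as the paper: both verify the frieze axioms via Lemmas \ref{Lem:CC-frieze} and \ref{Lem:CC-frieze-coeff}, obtain the fountain shape from the correspondence with fountain triangulations, and deduce $\mc{F}=\mc{G}[1]$ from $X^\S(M)=\chi_{\pi\S}(\pi M[1])$ together with $M_{(a,b)}[1]\cong M_{(a-1,b-1)}$. Your extra remarks on nonvanishing of the entries and on the degenerate case $b=a+1$ of the frieze relation are details the paper leaves implicit, and they check out.
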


\begin{proof}

A cluster tilting subcategory $\T$ in $\mc{C}$ corresponds to a fountain triangulation of the completed $\infty$-gon, see Remark \ref{Sub:triangulations}, w.l.o.g. at $0$.
First let us note that although an infinite arc is contained in the triangulation corresponding to $\T$, this arc will not be part of the frieze: from the description of the AR-quiver of $\CCC_2$ in Proposition \ref{Prop:AR-quiver-Frobenius} we see that the exchange formula only makes sense for the modules corresponding to finite arcs. Thus we just restrict to this part of the AR-quiver (that is, the part of type $\ZZ  A_{\infty}$). 
From Lemma \ref{Lem:CC-frieze} (1) we know that $\chi_{\mc{T}}(T[1])=1$ for all $T \in \mc{T}$.  For any fountain arc  $(0,n)$ with $n \geq 2$  one can use Lemma \ref{Lem:CC-frieze} to compute all entries $m_{a-1,b-1}$ of $\mc{G}$ with $0 \leq a < b \leq n$ of $\mc{G}$.

Since there are infinitely many arcs $(0,n) \in \mc{T}$, we can compute all entries of the positive part of the half frieze. The same procedure for $(-n,0)$, $n \geq$ yields the remaining negative part of the half frieze.

Assertion (2) follows analogously from Lemma \ref{Lem:CC-frieze-coeff}, using the cluster character $X^\S$.
For the last assertion note that for a cluster tilting subcategory $\mc{T} \in \mc{C}$, the corresponding cluster tilting subcategory $\S$ in $\mc{C}_2$ is $\S=\mc{T} \cup \{(n,n+1): n \in \ZZ\}$. Further, we have $\chi_{\mc{T}}(M_{(a,b)}[1])=X^\S(M_{(a,b)})$. Thus the frieze from the Frobenius category is $\mc{F}=\mc{G}[1]$.
\end{proof}

\section{Examples} \label{Sec:Examples}

\subsection{Fountain triangulation} \label{Sub:Fountain-frieze}For ease of comparison with the computations in \cite[Example 6.14]{Paquette-Yildirim}, we  use the same notation. First we will compute the fountain frieze consisting of a left and a right half frieze for the fountain triangulation 
$$\S=\{ (0,n): |n| \geq 2 \} \cup \{ (n,n+1): n \in \ZZ \} \cup \{ (0, \infty)\}$$ 
in $\mc{C}_2$. In the notation of Paquette--Y{\i}ld{\i}r{\i}m, this means that the triangulation consists of arcs $\alpha_i=(0,n)$ and $\beta_i=(-n,0)$ for $n \geq 2$. We will use our cluster character $X^\S$ and restrict the variables $x_{a,b}$ for $(a,b) \in \S$ to $1$. Then we get that $X^\S(\alpha_i)=X^\S(\beta_i)=1$, as well as the cluster characters of the boundary arcs are all $1$. The calculations are the same as for the Paquette--Y{\i}ld{\i}r{\i}m-cluster character for the cluster tilting subcategory $\mc{T}=\S \setminus \{ (n,n+1): n \in \ZZ \}$ of $\mc{C}$, see \cite[Examples 6.14 and 6.15]{Paquette-Yildirim}, except that $X^\S(\alpha)=\chi_\mc{T}(\alpha[1])$ for any $\alpha$ in $\mc{C}$. The beginning of the frieze looks as follows: 

   \begin{align*}
    \begin{array}{c|cccccccccc}
        &0&1&2&3&4&5&6&7&8&\ldots \\\hline
        0&0 & 1 & 1 & 1 & 1 & 1&1 & 1 & 1 & \ldots \\
        1& & 0 & 1& * & * & * & *& *& *& \ldots \\
        2&&& 0&1& * &* &*&*&*& \ldots \\
        3&&&&0&1& *&*&*&*&\ldots \\
        4&&&&&0& 1&*&*&*& \ldots \\
        5&&&&&&0&1&*&*&\ldots\\
        6&&&&&&&0&1&*&\ldots \\
        7&&&&&&&&0 & 1&\ldots
    \end{array}
\end{align*}

Now the entries $(i,j)$ with either $0 \leq i < j$ or $i < j \leq 0$ can be computed with Lemma \ref{Lem:CC-frieze-coeff}, that is, the half frieze coming from the fountain triangulation. Here is a picture of the first part of it

   \begin{align*}
    \begin{array}{c|cccccccccc}
        &0&1&2&3&4&5&6&7&8&\ldots \\\hline
        0&0 & 1 & 1 & 1 & 1 & 1&1 & 1 & 1 & \ldots \\
        1& & 0 & 1& 2& 3& 4 & 5 & 6 & 7 & \ldots \\
        2&&& 0&1& 2 &3 &4&5&6& \ldots \\
        3&&&&0&1& 2&3 &4 &5 &\ldots \\
        4&&&&&0& 1&2 &3&4& \ldots \\
        5&&&&&&0&1&2&3&\ldots\\
        6&&&&&&&0&1&2&\ldots \\
        7&&&&&&&&0 & 1&\ldots
    \end{array}
\end{align*}

One can also compute the cluster character $X^S(\eta) \in  \CC\llbracket x_{a,b}^\pm: (a,b) \in \S \rrbracket$ for any arc $\eta$ that \emph{crosses} the fountain, that is, $\eta \cap \alpha_i \neq \varnothing$ for infinitely many $i$.  But the result will be a Laurent series with positive coefficients in the $x_{a,b}$, see for example the calculation in \cite[Examples 6.14 and 6.15]{Paquette-Yildirim}. Thus specialising the variables to $1$ will yield a non-integer expression, i.e., $\infty$. Note that this fountain frieze does not correspond to a Penrose tiling, since the sequence $\xx^R$ is the constant sequence $1,1,1,1\ldots$. One would need infinitely many mutations to obtain a valid index sequence for a Penrose tiling.

For an example of a frieze that is coming from the periodic index sequence $\xx^R=1,0,1,0, \ldots$, see \cite[Example 4.20]{FaberICRA}. In this example, the special fountain triangulation $\S$ of the completed $\infty$-gon is unique. It would be interesting to further study connections between (periodic) index sequences and their corresponding friezes and triangulations.

\subsection{Leapfrog triangulation}
Here we work in $\mc{C}_2$ with 
$$\S=\{ (-n,n-1): n \in \N_{\geq 2}\} \cup \{ (-n,,n): n \in \N_{\geq 1} \} \cup \{ (n,n+1): n \in \ZZ\} \ .$$
Although $\S$ is not cluster tilting in $\mc{C}_{2}$, it is still maximally rigid, see \cite[Appendix A.2]{ACFGS}. Note that if we consider the projection of $\S$ to the Holm--J{\o}rgensen category $\mc{D}$, then this is indeed cluster tilting. Hence we can compute the cluster character $X^\S$ for all generically free indecomposable modules in $\mc{C}_2$, that is, all finite arcs. Setting $x_{a,b}=1$ for any $(a,b) \in \S$, we thus obtain a non-periodic infinite frieze, since we can compute all entries in the frieze \eqref{Fig:leapfrogfrieze} with Lemma \ref{Lem:CC-frieze-coeff}. Note that the quiddity sequence is 
$$\{ a_i\}_{i=-\infty}^\infty = \begin{cases} 1 \text{ if } i=0 \\
  								2 \text{ if } i=1 \\
								3 \text{ else}
								\end{cases}
								$$

  \begin{align}
    \begin{array}{c|cccccccccc}
        &-4&-3&-2&-1&0&1&2&3&4&\ldots \\\hline
        -4&0 & 1 & 3 & 8 & 21 & 13 &4 & 2 & 1 & \ldots \\
        -3& & 0 & 1& 3& 8& 5 & 2 & 1 & 1 & \ldots \\
        -2&&& 0&1& 3 &2 &1&1&2 & \ldots \\
        -1&&&&0&1& 1&1 &2 &5 &\ldots \\
        0&&&&&0& 1&2 &5&13& \ldots \\
        1&&&&&&0&1&3&8 &\ldots\\
        2&&&&&&&0&1&3&\ldots \\
        3&&&&&&&&0 & 1&\ldots
    \end{array}  \label{Fig:leapfrogfrieze}
\end{align}

One sees that the elements of $\S$ are ordered in the shape of a zigzag. Moreover, all entries of the frieze are Fibonacci numbers. 

Here indeed we find that $\S$ is equivalent to a triangulation giving the Penrose tiling with index sequence $(0, 0, \ldots)$: first we choose as fountain point $0$. Then the sequence $\xx^R=(0,0, \ldots )$ since there are no arcs of the form $(0,n)$ in the triangulation corresponding to $\S$. Choosing any other fountain point $i \in \ZZ$ will give us a sequence $\xx^R$ that is equivalent to $(0, 0, \ldots)$: if $i  \geq 0$, then the $\xx^R$-sequence is $(0,0, \ldots)$, if $i <0$, then after finitely many mutations (only one mutation at $i$ is sufficient), the triangulation also yields the $\xx^R$-sequence $(0,0, \ldots)$ and thus by Theorem \ref{Thm:fountains-equiv} the two fountains are equivalent.

\newcommand{\etalchar}[1]{$^{#1}$}

\end{document}